\documentclass[11pt]{amsart}

\usepackage{amsmath,amssymb,amsbsy,amsfonts,amsthm,latexsym,
         amsopn,amstext,amsxtra,euscript,amscd,amsthm, mathtools,dsfont}
         \usepackage{fullpage,amssymb,amsfonts,amsmath}
\usepackage{enumerate}
\usepackage[shortlabels]{enumitem}
\usepackage{comment}
\usepackage{algorithm, algorithmic}
\usepackage{graphicx, subfigure} 
\usepackage{hyperref} 
\usepackage[capitalise]{cleveref}
\usepackage{url}
\usepackage{array}
\usepackage{todonotes}
\usepackage{bbm}
\usepackage{mathrsfs} 
\usepackage{diagbox}
\usepackage[section]{placeins}

\newtheorem{theorem}{Theorem}[section]

\newtheorem{lemma}[theorem]{Lemma}
\newtheorem{conjecture}[theorem]{Conjecture}
\newtheorem{question}[theorem]{Question}

\newtheorem*{theorem*}{Theorem}

\theoremstyle{remark}
\newtheorem*{remark}{Remark}

\renewcommand{\epsilon}{\varepsilon}

\setcounter{tocdepth}{1}
\numberwithin{equation}{section}
\numberwithin{figure}{section}
\numberwithin{table}{section}

\makeatletter
\let\c@table\c@figure 
\let\ftype@table\ftype@figure 
\makeatother

\DeclarePairedDelimiter\floor{\lfloor}{\rfloor}

\newcommand{\abs}[1]{\left| #1 \right|}

\renewcommand{\leq}{\leqslant}
\renewcommand{\geq}{\geqslant}
\newcommand{\E}{\mathbb{E}}
\newcommand{\K}{\mathcal{K}}

\begin{document}
\title{A conjectural asymptotic formula for multiplicative chaos in number theory}

\author{Daksh Aggarwal}
\address{
Department of Mathematics \\ 
Grinnell College\\ 
1115 8th Ave \# 3011\\
Grinnell, IA\\
USA \\
50112} 
\email{aggarwal2@grinnell.edu}

\author{Unique Subedi}
\address{
Department of Statistics \\ 
University of Michigan\\ 
1085 University Ave\\
323 West Hall\\
Ann Arbor, MI\\
USA \\
48109
} 
\email{subedi@umich.edu}

\author{William Verreault}
\address{
D\'{e}partement de Math\'{e}matiques et de Statistique\\ 
Universit\'{e} Laval\\ 
Qu\'ebec\\
QC\\
G1V 0A6 \\
Canada} 
\email{william.verreault.2@ulaval.ca}

\author{Asif Zaman}
\address{Department of Mathematics\\
University of Toronto \\
40 St. George Street, Room 6290 \\
Toronto, ON \\
Canada \\
M5S 2E4}
\email{zaman@math.toronto.edu}

\author{Chenghui Zheng}
\address{Department of Statistics\\
University of Toronto \\
100 St.George Street\\
Toronto, ON \\
Canada \\
M5S 3G3}
\email{chenghui.zheng@mail.utoronto.ca}

\maketitle

\begin{abstract} We investigate a special sequence of random variables $A(N)$ defined by an exponential power series with independent standard complex Gaussians $(X(k))_{k \geq 1}$. Introduced by Hughes, Keating, and O'Connell in the study of random matrix theory, this sequence relates to Gaussian multiplicative chaos (in particular ``holomorphic multiplicative chaos'' per Najnudel, Paquette, and Simm) and random multiplicative functions. Soundararajan and Zaman recently determined the order of $\E[|A(N)|]$.  By constructing an  algorithm to calculate $A(N)$ in $O(N^2 \log N)$ steps, we produce computational evidence that their result can likely be strengthened to an asymptotic result with a numerical estimate for the asymptotic constant.  We also obtain similar conclusions when $A(N)$ is defined using standard real Gaussians or uniform $\pm 1$ random variables. However, our evidence suggests that the asymptotic constants do not possess a natural product structure.
\end{abstract}

\section{Introduction}

Let $(X(k))_{k \geq 1}$ be a sequence of independent standard complex Gaussians; hence, the real and imaginary parts of $X(k)$ are independent real Gaussians with mean $0$ and variance $\frac{1}{2}$. Define the sequence of random variables $(A(N))_{N \geq 0}$ by the formal power series identity
\begin{equation}\label{power_series}
\exp\left(\sum_{k=1}^{\infty}\frac{X(k)}{\sqrt{k}}z^k\right) = \sum_{n=0}^{\infty} A(n) z^n.
\end{equation}
These random variables $A(N)$ naturally arise in several areas of probability and number theory.  As far as we are aware, they were first explicitly introduced by Hughes, Keating, and O'Connell \cite[(2.25)]{HughesKeatingOConnell-2000} in the context of random matrix theory.  Subsequent influential work of Fyodorov, Hiary, and Keating \cite{FyodorovHiaryKeating-2012,FyodorovKeating-2014} conjectured deep connections between random matrix theory, maxima of the Riemann zeta function on the critical line, and Gaussian multiplicative chaos. There is a vast literature  on each of these topics, so we shall refer the reader to some recent  surveys by Rhodes and Vargas \cite{RhodesVargas-2014}, Duplantier, Rhodes, Sheffield, and Vargas \cite{DuplantierRhodesSheffieldVargas-2017}, and Bailey and Keating \cite{BaileyKeating-2021}. On the probability side, Chhaibi and Najnudel \cite{ChhaibiNajnudel-2019} and Najnudel, Paquette, and Simm \cite{NajnudelPaquetteSimm-2020} studied the variables $A(N)$ (which they refer to as ``holomorphic multiplicative chaos'') to  establish direct links between random matrix theory and Gaussian multiplicative chaos. On the number theory side, Soundararajan and Zaman \cite{SoundararajanZaman-2021} studied $A(N)$ as a model problem for a  breakthrough of Harper \cite{harper_moments_2020} on the partial sums of random multiplicative functions.  

The existence of a limiting distribution for $A(N)$ is unknown, but it should presumably be closely related to the limiting distribution of the ``total mass of critical Gaussian multiplicative chaos'' studied by Duplantier et al. \cite{DuplantierRhodesSheffieldVargas2014u,DuplantierRhodesSheffieldVargas-2014} and Barral et al.  \cite{BarralKupiainenNikulaSaksmanWebb2015f} for example. Some progress towards the distribution of $A(N)$ has recently been made by estimating its moments. Notably, building on work of Diaconis and Gamburd \cite{DiaconisGamburd-2004} with magic squares (see also Gorodetsky \cite{gorodetskyMagicSquaresSymmetric2021}),  Najnudel, Paquette, and Simm \cite{NajnudelPaquetteSimm-2020} recently proved a beautiful formula: for positive integers $q \geq 1$,
 \[
 \mathbb{E}[ |A(N)|^{2q} ] = \#\{ q \times q \text{ squares with $\mathbb{Z}_{\geq 0}$ entries and all row and column sums equal to $N$} \}.
 \]
  For example, this elegant combinatorial identity implies that the $L^2$-moment satisfies
 \begin{equation}
  \mathbb{E}[ |A(N)|^{2} ] = 1.
  \label{eqn:L2} 	
 \end{equation}
 A proof of \eqref{eqn:L2} also appears in Soundararajan and Zaman \cite{SoundararajanZaman-2021} as a consequence of the cycle index formula for the symmetric group. Surprisingly, the order of the $L^1$-moment   is a bit smaller: there exists absolute positive constants $C_1$ and $C_2$ such that for $N \geq 2$,
\begin{equation}\label{eqn:L1}
\frac{C_1}{(\log N)^{1/4}} \leq \mathbb{E}[|A(N)|] \leq \frac{C_2}{(\log N)^{1/4}}.    
\end{equation}
This estimate (and a similar one for all the lower moments $\mathbb{E}[ |A(N)|^{2q} ]$ with $0 \leq q \leq 1$) was recently established by Soundararajan and Zaman \cite{SoundararajanZaman-2021} and the upper bound was proved independently by Najnudel, Paquette, and Simm \cite{NajnudelPaquetteSimm-2020}. In view of \eqref{eqn:L1}, it is natural to conjecture the following asymptotic formula for $\E[|A(N)|]$. 

\begin{conjecture} \label{conj:complex} Define $(A(N))_{N \geq 0}$ by \eqref{power_series} using a sequence $(X(k))_{k \geq 1}$ of independent standard complex Gaussians. 
	There exists a constant $C > 0$ such that 
	\[
     \mathbb{E}[|A(N)|] \sim \frac{C}{(\log{N})^{1/4}} \qquad  \text{as } N \to \infty.
	\]
\end{conjecture}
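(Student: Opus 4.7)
The plan is to combine the contour integral representation of $A(N)$ with the theory of critical Gaussian multiplicative chaos (GMC). From \eqref{power_series} and Cauchy's formula,
\[
A(N) = \frac{1}{2\pi r^N} \int_0^{2\pi} \exp\!\bigl(F(re^{i\theta})\bigr) e^{-i N \theta}\, d\theta, \qquad F(z) := \sum_{k \geq 1} \frac{X(k)}{\sqrt{k}} z^k,
\]
for any $r < 1$. Taking $r = 1 - 1/N$, the centered Gaussian field $F$ on $|z|=r$ has covariance $\sum_{k \geq 1} r^{2k}\cos(k(\theta_1-\theta_2))/k \sim -\log|1 - r e^{i(\theta_1-\theta_2)}|$, the canonical log-correlated kernel, with maximum variance $\sim \log N$. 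This places $\exp(F)$ in the \emph{critical} regime of GMC and identifies $A(N)$ as essentially its $N$-th Fourier coefficient.

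To upgrade \eqref{eqn:L1} to an asymptotic, I would adapt the ballot/barrier framework developed in \cite{harper_moments_2020, NajnudelPaquetteSimm-2020, SoundararajanZaman-2021}. Decompose $F = F_{\leq K} + F_{>K}$ at a threshold $K$ that is a small power of $\log N$. Condition on $F_{\leq K}$ and restrict to the event that the dyadic partial sums $\sum_{k \leq 2^j} X(k) e^{ik\theta}/\sqrt{k}$ remain below an appropriate linear barrier; on this good event, a conditional second-moment method is known to be sharp. Unconditioning should produce the Seneta--Heyde renormalization factor $(\log N)^{-1/4}$ together with a candidate limiting random variable $Z$ built from the critical derivative martingale of $\exp(F)$ on the unit circle. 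The conjecture would then follow from establishing $(\log N)^{1/4}|A(N)| \to Z$ in distribution and in $L^1$, with $C = \mathbb{E}[Z]$.

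The main obstacle is the convergence and uniform integrability step. The proofs of \eqref{eqn:L1} in \cite{NajnudelPaquetteSimm-2020, SoundararajanZaman-2021} inherently leave multiplicative slack in their second-moment bounds, so extracting a precise asymptotic demands an analogue, for this oscillatory Fourier coefficient, of the sharp Derrida--Spohn asymptotics known for critical branching random walks, together with the convergence of critical GMC in related settings as in \cite{DuplantierRhodesSheffieldVargas-2014, BarralKupiainenNikulaSaksmanWebb2015f}. Two specific complications distinguish the present problem: $A(N)$ is a single oscillatory Fourier mode rather than a total mass, so the cancellations induced by $e^{-iN\theta}$ must be controlled without degrading the constant; and the effective truncation of $F$ at $k \sim N$ yields a one-sided frequency band rather than a symmetric mollification, which obstructs direct comparison with standard scale-invariant chaos constructions. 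For these reasons, rigorously identifying $C$ beyond the numerical estimate obtained here appears to be a substantial undertaking, and explains why the authors pursue computational rather than analytic evidence.
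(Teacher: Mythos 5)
The statement you are addressing is a conjecture: the paper does not prove it, and offers only computational evidence, namely Monte Carlo estimates of $\mathbb{E}[|A(N)|]$ for $N$ up to $2\times 10^4$ obtained with the $O(N^2\log N)$ algorithm of \cref{thm:algorithm}, which stabilize near $C\approx 1.07$ after multiplying by $(\log N)^{1/4}$. So there is no paper proof to match against; the only question is whether your argument establishes the asymptotic, and it does not. Your setup is sound and consistent with how the order-of-magnitude bounds \eqref{eqn:L1} are actually proved in \cite{NajnudelPaquetteSimm-2020,SoundararajanZaman-2021}: the contour representation of $A(N)$, the identification of the field on $|z|=1-1/N$ as log-correlated with variance $\sim\log N$, the barrier and conditional second-moment framework, and the Seneta--Heyde factor $(\log N)^{-1/4}$ are all the right objects. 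But the proposal reduces \cref{conj:complex} to statements that are themselves open and at least as hard: (i) convergence of $(\log N)^{1/4}|A(N)|$ in distribution to some limit $Z$ --- the paper explicitly notes that the existence of a limiting distribution for $A(N)$ is unknown; (ii) the accompanying uniform integrability or $L^1$ convergence, which is precisely where the multiplicative slack in the known second-moment arguments lives; and (iii) a sharp Derrida--Spohn-type constant for a single oscillatory Fourier mode of a one-sided band approximation to critical chaos, for which no analogue currently exists. You flag all three obstacles yourself, which is to your credit, but it means the argument is a research program, not a proof, and in particular it does not produce the constant $C$.

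One further caution on the step you treat most lightly: with $r=1-1/N$ the second moment of $e^{F(re^{i\theta})}$ at a single point is of size $N$, and \eqref{eqn:L2} says the oscillation from $e^{-iN\theta}$ must cancel essentially all of that down to $\mathbb{E}[|A(N)|^2]=1$. Controlling this cancellation \emph{without losing a constant factor} is exactly the point at which the existing proofs degrade to matching upper and lower bounds rather than an asymptotic, so this is not a technical afterthought but the central difficulty. Given that, the honest comparison is: the paper buys a concrete numerical value of $C$ (and, via the conditional-expectation experiments of Section \ref{Euler_product_conjecture}, evidence that $C$ has no product structure), at the price of proving nothing; your route would, if completed, prove existence of $C$ and identify it as $\mathbb{E}[Z]$, but as written it proves nothing either and yields no numerical information.
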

\noindent

Our interest in this conjecture primarily stems from its relationship to the theory of random multiplicative functions as outlined by Soundararajan and Zaman \cite{SoundararajanZaman-2021}. A random Steinhaus multiplicative function $f : \mathbb{N} \to \{|z|=1\}$ is obtained by picking an independent random variable $f(p)$ uniform on the unit circle $\{ |z|=1\}$ for each prime $p$ and extending it (completely) multiplicatively to all positive integers. Namely, if $n = p_1 \cdots p_{\ell}$ then $f(n) = f(p_1) \cdots f(p_{\ell})$. The (normalized) random partial sum $e^{-N/2} \sum_{n \leq e^N} f(n)$ parallels the random variable $A(N)$. Indeed, since we have that $\E[f(m)\overline{f(n)}] = \mathbf{1}_{m=n}$, it follows that the $L^2$-moment satisfies, for $N \geq 1$,
\[
\E\Big[ \Big| \sum_{n \leq e^N} f(n) \Big|^2 \Big] = \lfloor e^N \rfloor,
\]
which mirrors \eqref{eqn:L2}. As with $A(N)$, the limiting distribution of $e^{-N/2}\sum_{n \leq e^N} f(n)$ is not yet known to exist, but it should likely be related to the ``total mass of critical Gaussian multiplicative chaos''.  Harper \cite[Theorem 1 and Corollary 2]{harper_moments_2020} proved strong bounds for the tails of $\sum_{n \leq e^N} f(n)$ and amazingly showed that there exists absolute positive constants $C_1$ and $C_2$ such that for $N \geq 2$,
\[
\frac{C_1 e^{N/2}}{ (\log N)^{1/4} } \leq \E\Big[ \Big| \sum_{n \leq e^N} f(n) \Big| \Big] \leq \frac{C_2 e^{N/2}}{ (\log N)^{1/4} }.
\]
This inspired the proof of \eqref{eqn:L1} in \cite{SoundararajanZaman-2021} and also suggests a conjecture. Namely, there conjecturally exists an absolute constant $C > 0$ (not necessarily the same as in \cref{conj:complex}) such that 
\begin{equation}
\E\Big[ \Big| \sum_{n \leq e^N} f(n) \Big| \Big] \sim \frac{C e^{N/2}}{(\log N)^{1/4}} \qquad  \text{as } N \to \infty.
\label{eqn:asymptotic_integer}	
\end{equation}
In our view, evidence towards \cref{conj:complex} acts as indirect evidence for the above conjectural asymptotic formula. This view is additionally supported by the strong parallels between $A(N)$ and partial sums of random multiplicative functions over the polynomial ring $\mathbb{F}_q[t]$; see \cite{SoundararajanZaman-2021} for details. 

From a number theory perspective, it is reasonable to wonder whether the putative constant $C$  in  \eqref{eqn:asymptotic_integer} possesses an Euler product structure. That is, does there exist a sequence of complex numbers $(\beta_p)_{p}$ indexed by primes $p$  such that  $C$  in  \eqref{eqn:asymptotic_integer} satisfies
	\[
	C = \prod_{p} \beta_p \, ?
	\]
	If so, each local factor $\beta_p$ would presumably depend at most on the prime  $p$ and the distribution of $f(p)$. The parallels between \eqref{eqn:asymptotic_integer} and \cref{conj:complex} consequently prompt an informal question. 

\begin{question}\label{q:complex}
If \cref{conj:complex} holds, then does there exist a sequence of complex numbers $(\beta_k)_{k\geq 1}$ such that 
\begin{equation} \label{eqn:product}
    C = \prod_{k \geq 1} \beta_k,
\end{equation}
where each local factor $\beta_k$ depends at most on $k$ and the distribution of $X(k)$?
\end{question}

The purpose of this article is to computationally investigate the distribution of $A(N)$ and, in particular, test \cref{conj:complex} and \cref{q:complex} via  Monte Carlo simulations. To obtain estimates with reasonable precision, we must therefore efficiently calculate $A(N)$ for a large number of samples of the random sequence $(X(k))_{k \geq 1}$. This requirement is at the heart of our experimental pursuit and poses two key challenges. 

First, the sample size must be quite large to make conclusions with reasonable precision. By \eqref{power_series}, each $A(N)$ is defined in terms of the $N$ independent random variables $X(1), \dots, X(N)$ (see \cref{subsec:algorithm_description} for details), so we must compute $A(N)$ for a total number of independent samples that grows at least exponentially with $N$. This challenge is intrinsic to Monte Carlo simulations, so we do not attempt to address it. Second, a naive application of the power series identity \eqref{power_series} yields an expensive method for calculating $A(N)$ in terms of partitions of $N$ (see \cref{subsec:algorithm_description} for details). The number of partitions of $N$ is asymptotically $\frac{1}{4N\sqrt{3}} \exp( \pi \sqrt{2N/3})$. Even with the most efficient algorithms  to generate integer partitions, this brute force implementation appears to require a sub-exponential time and space complexity. This is prohibitive for our purposes. We instead devise an efficient algorithm to compute $A(N)$ for a single instance of $(X(k))_{k \geq 1}$.

\begin{theorem}\label{thm:algorithm}
Given any fixed sequence of complex numbers $(X(k))_{k \geq 1}$, define the sequence of complex numbers $(A(n))_{n \geq 0}$ by the formal power series identity \eqref{power_series}. For any $N \in \mathbb{N}$,  there exists an algorithm that computes the $N$ values $A(1),\dots,A(N)$ in  $O(N^2\log{N})$ time using $O(N)$ space.
\end{theorem}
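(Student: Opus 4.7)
The plan is to differentiate the generating-function identity \eqref{power_series} to obtain a short linear recursion for $A(n)$, and then iterate it in place.

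Setting $f(z) = \sum_{k \geq 1} (X(k)/\sqrt{k})\, z^k$ and $g(z) = \sum_{n \geq 0} A(n) z^n = \exp(f(z))$, the relation $g'(z) = f'(z) g(z)$, upon matching the coefficient of $z^{n-1}$, gives
\[
n A(n) = \sum_{k=1}^n \sqrt{k}\, X(k)\, A(n-k), \qquad A(0) = 1.
\]
Thus each $A(n)$ is a fixed linear combination of $A(0), \ldots, A(n-1)$ whose weights come from the precomputed sequence $d_k := \sqrt{k}\, X(k)$. The algorithm is then: precompute and store $d_1, \ldots, d_N$, along with an array holding $A(0), \ldots, A(N)$; for $n = 1, \ldots, N$, evaluate the weighted sum above with a running accumulator, divide by $n$, and write the result into position $n$ of the array.

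Correctness is immediate from the derivation. For complexity, step $n$ performs $n$ complex multiplications and $n-1$ additions, so the total number of arithmetic operations is $\sum_{n=1}^N n = O(N^2)$, and only $O(N)$ words of memory are ever live (the two length-$N$ arrays plus a few scalars). The extra $\log N$ factor in the time bound is then attributed to carrying $O(\log N)$ bits of floating-point precision to control cumulative round-off across the sum, so that each arithmetic operation costs $O(\log N)$ in the bit model.

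I expect the main obstacle to lie not in the recursion itself --- which is routine once one differentiates --- but in ensuring that (a) the in-place iteration truly fits into $O(N)$ memory and (b) the precision analysis rigorously justifies the $O(\log N)$ factor. An alternative that avoids any bit-level argument is to compute $\exp(f) \bmod z^{N+1}$ by iteratively accumulating $f(z)^k/k!$ for $k = 0, 1, \ldots, N$ via FFT-based polynomial multiplication: each of the $N$ products costs $O(N \log N)$ arithmetic operations, and only the current power $f^k$, the running partial sum, and $f$ itself need be stored at any time, so this variant matches both claimed bounds at the level of arithmetic operations.
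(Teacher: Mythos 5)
Your proof is correct, but it takes a genuinely different route from the paper. The paper never differentiates the generating function: it organizes the partition sum $A(n)=\sum_{|\lambda|=n}a(\lambda)$ by the largest part, setting $A_k(n)$ (resp.\ $B_k(n)$) to be the contribution of partitions with $\lambda_1=k$ (resp.\ $\lambda_1\leq k$) and using the recursion $A_k(n)=\sum_{m=1}^{\lfloor n/k\rfloor}(X(k)/\sqrt{k})^m\frac{1}{m!}B_{k-1}(n-mk)$; the $O(N^2\log N)$ bound then comes from the harmonic sum $\sum_{k\leq N}O(N^2/k)$. Your logarithmic-derivative recursion $nA(n)=\sum_{k=1}^{n}\sqrt{k}\,X(k)\,A(n-k)$ is simpler and, in the same arithmetic-operation model the paper uses (each complex operation counted as $O(1)$), runs in $O(N^2)$ time with $O(N)$ space --- strictly inside the claimed $O(N^2\log N)$, so the theorem follows a fortiori. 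Your attempt to ``spend'' the $\log N$ factor on bit precision is unnecessary and slightly off-key: the paper makes no bit-complexity claims, and you need not either; just note $O(N^2)\subseteq O(N^2\log N)$. The FFT variant you sketch at the end also works but adds nothing. If anything, the paper's partition-based scheme may have a mild practical advantage in that its summands $(X(k)/\sqrt{k})^m/m!$ stay small, whereas your recursion sums $n$ terms of size up to $\sqrt{n}$ before dividing by $n$; but this is a numerical-stability remark, not a gap in your argument, since the statement concerns an exact computation over $\mathbb{C}$.
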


Our algorithm does not generate partitions. Instead, it attains its efficiency by exploiting the recursive properties of integer partitions as well as the recursive structure of $A(N)$. See \cref{sec:Algorithm} for a description of our algorithm and the proof of \cref{thm:algorithm}. 

Equipped with \cref{thm:algorithm}, we can investigate variants of \cref{conj:complex} and \cref{q:complex}  for sequences $(X(k))_{k \geq 1}$ other than independent standard complex Gaussians, provided \eqref{eqn:L1} plausibly holds in those cases. The proof of \eqref{eqn:L1} in \cite{SoundararajanZaman-2021}  relies on the covariance structure of 
\[
\mathrm{Re} \sum_{e^n < k \leq e^{n+1}} \frac{X(k) e^{i k \theta}}{\sqrt{k}} 
\]
for $1 \leq n \leq \log N$ and $\theta \in [0,2\pi]$. For any fixed $n$ and $\theta$, note that these are independent real Gaussians with mean $0$ and variance close to $1/2$. These observations suggest \eqref{eqn:L1} is plausibly true for random variables $A(N)$ defined by \eqref{power_series} with sequences $(X(k))_{k \geq 1}$ that preserve this structure. Consequently, we expand our investigation to include all three of the following scenarios:
\begin{itemize}
    \item $(X(k))_{k  \geq 1}$ is a sequence of independent standard complex Gaussians (as before). 
    \item $(X(k))_{k  \geq 1}$ is a sequence of independent standard real Gaussians. 
    \item $(X(k))_{k  \geq 1}$ is a sequence of independent random variables  uniform on $\{\pm 1\}$.
\end{itemize}
For convenience, we refer to the latter as ``a sequence of $\pm1$ variables". These are mathematically and computationally simpler. This allows us to push our computations further, calculate conditional expectations more precisely, and actually exhaust the sample space of $A(N)$ for small values of $N$.   We therefore investigate variants of \cref{conj:complex} and \cref{q:complex}.
\begin{conjecture}\label{conj:other} \cref{conj:complex} also holds if $(X(k))_{k \geq 1}$ is a sequence of independent standard real Gaussians or a sequence of independent uniform $\{ \pm 1 \}$ random variables (each with a  possibly different constant $C$). 
\end{conjecture}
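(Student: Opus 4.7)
The plan is to lift the argument of Soundararajan and Zaman for the complex Gaussian case to the other two distributions, and then to combine it with a convergence-in-distribution statement. As a first step, I would reprove the bounds \eqref{eqn:L1} in the real Gaussian and $\pm 1$ settings. The excerpt already points out that the SZ proof hinges on the covariance structure of the real part of the frequency block $\sum_{e^n < k \leq e^{n+1}} X(k) e^{ik\theta}/\sqrt{k}$, which is centered with variance close to $1/2$ in all three models. For the real Gaussian case this is immediate, and the barrier / tilt arguments based on Gaussian moment calculations go through with only cosmetic adjustments to constants. For the $\pm 1$ case I would apply an invariance-principle / Lindeberg-type exchange to the tilted moments computed in the Gaussian setting; standard Berry--Esseen-style bookkeeping should yield matching upper and lower bounds of order $(\log N)^{-1/4}$.

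Given the correct order of magnitude, the next task is to promote it to an asymptotic with an explicit constant $C$. The natural route is through the conjectural identification of $A(N)$, after rescaling by $(\log N)^{1/4}$, with the total mass of a critical holomorphic multiplicative chaos-type object on the unit circle. Concretely, I would aim to prove that $(\log N)^{1/4}|A(N)|$ converges in distribution to some non-degenerate random variable $Z$ and that the family $\{(\log N)^{1/4}|A(N)|\}_{N \geq 2}$ is uniformly integrable, the latter via a sharpened $L^{1+\epsilon}$ moment bound in the spirit of Harper's work on random multiplicative functions. The constant is then $C = \E[|Z|]$, and $Z$ should depend on the distribution of the $X(k)$, consistent with the numerical evidence in this paper that the three constants genuinely differ.

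The hardest step is the existence and identification of $Z$; this is exactly what remains open in \cref{conj:complex}, and the non-Gaussian $\pm 1$ case is strictly more difficult, since tools such as the Diaconis--Gamburd magic-square moment identity and other random matrix theory inputs are unavailable. A more tractable intermediate target would be a relative universality statement: conditional on the complex Gaussian asymptotic, derive the corresponding statement in the real Gaussian and $\pm 1$ models via a block-by-block invariance principle applied to the exponent $\sum_{k \geq 1} X(k) z^k/\sqrt{k}$, combined with continuity of $x \mapsto |x|$ under weak convergence. This would reduce \cref{conj:other} to \cref{conj:complex} together with an explicit comparison of the marginal distributions of $X(k)$, consolidating the asymptotic question into a single open problem rather than three parallel ones.
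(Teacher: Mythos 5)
This statement is a conjecture, and the paper does not prove it: its only support is the Monte Carlo evidence of Sections \ref{sec:real} and \ref{sec:pm1} (sample means of $|A(N)|\cdot(\log N)^{1/4}$ stabilizing near $0.957$ and $0.896$). Your proposal is likewise not a proof but a research program, and you concede as much when you write that the existence and identification of the limit $Z$ ``is exactly what remains open.'' That is the entire content of the conjecture: the introduction states explicitly that the existence of a limiting distribution for $A(N)$ is unknown even in the complex Gaussian case, and \eqref{eqn:L1} only pins down the order of magnitude, not an asymptotic. So the central step of your plan --- convergence in distribution of $(\log N)^{1/4}|A(N)|$ to a non-degenerate $Z$, followed by $C=\E[|Z|]$ via uniform integrability --- is not a reduction to known results but a restatement of the open problem.

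Beyond that, the proposed ``relative universality'' reduction of \cref{conj:other} to \cref{conj:complex} has a concrete obstruction visible in the paper's own data. A block-by-block invariance principle applied to the exponent $\sum_k X(k)z^k/\sqrt{k}$, combined with continuity of $x\mapsto|x|$, would transfer the \emph{limit law} itself from one model to another, forcing the three constants to coincide. But Table \ref{table:result_summary} gives $C\approx 1.07$, $0.957$, and $0.896$ for the three distributions, and Section \ref{Euler_product_conjecture} shows that conditioning on a single low-index variable such as $X(2)$ shifts the constant macroscopically (Table \ref{table:conditioned_C}). The small-$k$ coefficients each make an $O(1)$ contribution to the distribution of $A(N)$, so no Lindeberg-type exchange can wash out the marginal law of $X(k)$; any universality statement would at best transfer the \emph{order} $(\log N)^{-1/4}$ (which is the content of extending \eqref{eqn:L1}, plausibly doable along the lines you sketch) but not the asymptotic constant, which is what the conjecture asserts exists. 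The honest assessment is that neither the paper nor your proposal proves the statement; yours is a reasonable roadmap whose every load-bearing step is currently open.
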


\begin{question}\label{q:other}
Do the constants $C$ from \cref{conj:other} have a product structure as in \cref{q:complex}?
\end{question}

We perform calculations for each type of random variable with $N = 2 \times 10^4$ and $5 \times 10^7$ samples. We report our conclusions in Sections \ref{sec:complex}, \ref{sec:real}, and \ref{sec:pm1}. Our data  supports \cref{conj:complex} and \ref{conj:other} in all cases. Table \ref{table:result_summary} lists the estimated values of the asymptotic constant for each of the three types of random variables. 
However, by considering conditional expectations  for $\pm 1$ variables, our computational evidence suggests the answer to Question \ref{q:other} (and hence \cref{q:complex}) is \textit{negative}.  The details of this investigation can be found in Section \ref{Euler_product_conjecture}.

\begin{table}
 \begin{tabular}{|c||c|} 
 \hline
Distribution of $X(k)$ & Estimated value of $C$  \\ [0.5ex] 
 \hline\hline
  Standard complex normal & 1.07 \\ 
 \hline
 Standard real normal & 0.957 \\ 
 \hline
Uniform on $\{\pm 1\}$ & 0.896 \\ 
 \hline
\end{tabular} ~\\[10pt]
\caption{Estimated values of the asymptotic constant $C$ in Conjectures \ref{conj:complex} and \ref{conj:other}.}
\label{table:result_summary}
\end{table}

 Finally, we could in principle carry out a similar computational study for a random Steinhaus multiplicative function $f$ over the integers to investigate the distribution of $\sum_{n \leq e^N} f(n)$ and \eqref{eqn:asymptotic_integer}.  Both $e^{-N/2} \sum_{n \leq e^N} f(n)$ and $A(N)$ have an $L^1$-moment that decays with rate $(\log N)^{-1/4}$. However, the number of primes $\leq e^N$ is asymptotically $e^N/N$, so the number of samples required for a Monte Carlo simulation of $e^{-N/2} \sum_{n \leq e^N} f(n)$ must grow at least exponentially with $e^N/N$. This requirement is substantially worse compared to $A(N)$ which only needs the number of samples to grow exponentially with $N$.  For instance, a calculation with a random multiplicative function $f$ analogous to the one we have performed for $A(N)$ would require us to calculate the partial sum $\sum_{n \leq e^N}f(n)$ with $e^N$ on the order of $10^{8685}$ and hence sample sequences with length $10^{8681}$. Moreover, we would need to efficiently compute $\sum_{n \leq e^N} f(n)$, but it is not clear to us whether this can be done in polynomial time with respect to $N$ as in \cref{thm:algorithm}.  Thus, given the computational resources required to carry out such a large-scale computation, it appears rather difficult to adequately investigate the corresponding conjecture and questions for random multiplicative functions. 

\medskip

\subsection*{Acknowledgements}
We are grateful to the Fields Institute for enabling this collaboration as part of their Undergraduate Summer Research Program and for providing financial support. This research was also supported by Compute Canada (\href{https://www.computecanada.ca}{\tt www.computecanada.ca}), where the majority of our computations were performed. We also thank Adam Harper for insightful comments and clarifications on the conjectural distribution of $A(N)$ and critical multiplicative chaos.

\section{Proof of Theorem \ref{thm:algorithm}}\label{sec:Algorithm}

\subsection{Description of the algorithm} \label{subsec:algorithm_description}

We follow \cite{SoundararajanZaman-2021} and introduce similar notation. A partition $\lambda$ is  a non-increasing sequence of non-negative integers $\lambda_1 \geq \lambda_2 \geq \cdots$ with $\lambda_n = 0$ from some point onwards. Let $|\lambda|$ be the sum of parts $\lambda_1 + \lambda_2 + \cdots$ and for an integer $k \geq 1$, let $m_k = m_k(\lambda)$ be the number of parts of $\lambda$ that are equal to $k$.  For a partition $\lambda$, define
\begin{equation}\label{ceq:3}
   a(\lambda) = a(\lambda; X) \coloneqq \prod_{k}\left(\frac{X(k)}{\sqrt{k}}\right)^{m_k}\frac{1}{m_k!} , 
\end{equation}
so that for $n \geq 1$,
\begin{equation} \label{eqn:partition_identity}
    A(n) = \sum_{|\lambda| = n} a(\lambda).
\end{equation}

For $k \geq 1$, let $A_k(n)$ be the contribution to $A(n)$ by partitions $\lambda$ such that $\lambda_1 = k$ and let $B_k(n)$ be the contribution to $A(n)$ by partitions $\lambda$ with $\lambda_1 \leq k$. In other words,
\begin{equation}\label{A_k_B_k}
   A_k(n) = \sum_{\substack{|\lambda| = n \\ \lambda_1 =k}} a(\lambda) \;\;\;
   \; \text{ and } \;\;\;\;\; B_k(n) =  \sum_{\substack{|\lambda|=n \\ \lambda_1\leq k}} a(\lambda).
\end{equation}

Now, consider a partition $|\lambda| = n$ such that $\lambda_1=k$. If $k$ has a multiplicity of $m$ in $\lambda$, then $\lambda$ is an extension of a unique partition $\lambda^{\prime}$ with $|\lambda'|= n-mk$ and $\lambda^{\prime}_1 \leq k-1$. In particular,  the partition $\lambda$ can be obtained by adding the part $k$ to the partition $\lambda^{\prime}$ exactly $m$ times. Since $\lambda_1 = k$, we note that $k$ can have a multiplicity of at most  $ \floor{\frac{n}{k}}$ in the partition $\lambda$. Thus, we deduce that
\[A_k(n) = \sum_{\substack{|\lambda| = n \\ \lambda_1=k}} a(\lambda) = \sum_{m=1}^{\floor{\frac{n}{k}}}\left(\frac{X(k)}{\sqrt{k}}\right)^m\frac{1}{m!}\;\sum_{\substack{|\lambda'|=  n-mk \\ \lambda_1' \leq k-1}} a(\lambda'),\]
which by \eqref{A_k_B_k} can be reformulated as
\begin{equation}\label{recursive_identity}
  A_k(n) =  \sum_{m=1}^{\floor{\frac{n}{k}}}\left(\frac{X(k)}{\sqrt{k}}\right)^m\frac{1}{m!}\;B_{k-1}(n-mk). 
\end{equation}
Since \eqref{A_k_B_k} is equivalent to
\[ B_{k-1}(n-mk) = \sum_{i=1}^{k-1} A_i(n-mk),\]
we can compute $A_k(n)$ for $1\leq n \leq N$ and $1\leq k \leq n$ recursively using \eqref{recursive_identity}. Finally, we compute the value of $A(n)$ using the identity
\[A(n) = \sum_{k=1}^{n}A_k(n).\]

\begin{remark} In \eqref{recursive_identity}, we note that the computation of $A_k (n)$ for a fixed pair $(k,n)$ only requires $B_{k-1}(n-mk)$ rather than individual $A_i(j)$ for all $0 \leq i \leq k-1$ and $1 \leq j \leq n-1$.  This observation allows us to improve the memory efficiency of our algorithm from $O(N^2)$ to $O(N)$ as we only have to store a vector of length $N$ instead of a matrix of size $N \times N$ while computing a sequence $(A(n))_{1 \leq n \leq N}$. 
\end{remark}

We now state the algorithm.
\begin{algorithm}[H]
\caption{Generate $(A(n))_{1 \leq n \leq N}$}
\label{alg:algorithm}
\begin{algorithmic}[1]
\REQUIRE $N \geq 1$, Sample sequence $(X(k))_{1 \leq k \leq N}$.
\ENSURE $(A(n))_{1 \leq n \leq N}$
\STATE Start at $k=1$. Compute $A_1(n)$ for all $1 \leq n \leq N$ and store it in an array $\mathcal{A}$. Use $A_0(0) = 1$.
\STATE  Using recursive identity  \eqref{recursive_identity}, compute $A_2(n)$ for all $2 \leq n \leq N$ and store it in an array $\mathcal{B}$. Update array $\mathcal{A}$ to the element-wise sum of arrays $\mathcal{A}$ and $\mathcal{B}$.

\STATE Repeat Step 2  for all $ k \leq N$. The last update of $\mathcal{A}$ ensures that $\mathcal{A}$ yields the required sequence $(A(n))_{1 \leq n \leq N}$ because $A_k(n)$ is summed over all partitions with the largest part $k$ for all $k \in \{1,2, \ldots, N\}$. 
\end{algorithmic}
\end{algorithm}

\noindent \textit{Remark}. In Step $2$,  notice that $n$ starts from $2$ because $1$ does not have a partition with largest part 2. Thus, for each $k$, we only have to compute $A_k(n)$ for $k \leq n \leq N$. Moreover, $B_{k-1}(n-mk)$ in \eqref{recursive_identity} is the $(n-mk)^{th}$ element of array $\mathcal{A}$.
\subsection{Computational complexity} 
Since we store the values of $A_{k-1}(n-mk)$, the number of operations required to compute a summand of identity \eqref{recursive_identity}
 is $O(1)$. For a fixed $k \leq n$, the number of steps required to compute $A_k(n)$ is 
 \[\sum_{m=1}^{\floor{\frac{n}{k}}}O(1) = O\Big(\frac{n}{k
 }\Big).\]
 Further, note that for a fixed $k$,  Steps $1$ and $2$ of Algorithm \ref{alg:algorithm}
compute $A_k(n)$ for all $k \leq n \leq N$. The number of steps required for such a computation is 
 \[\sum_{n=k}^{N}O\Big(\frac{n}{k}\Big) = O\Big(\frac{N^2}{k}\Big).\]
In Step 3, we repeat the same computation for all $1 \leq k \leq N$, and thus the total number of operations is
\[\sum_{k=1}^{N} O \Big(\frac{N^2}{k}\Big) = O(N^2\log{N}).\]
Thus, the algorithm runs in $O(N^2\log{N})$ time. The memory complexity of the algorithm is $O(N)$ because we only use two arrays of length $N$.

\subsection{Implementation}

This algorithm was implemented in the $\texttt{C++}$ programming language and the code can be accessed at \href{https://github.com/asif-z/multiplicative-chaos}{\tt https://github.com/asif-z/multiplicative-chaos}.  Our computations have been carried out on the clusters of Compute Canada. To achieve large-scale Monte Carlo simulations, we utilized multi-threading via \texttt{OpenMp} and \texttt{MPI}. 

We computed the averages of the values of $\abs{A(n)}$ using $S$ independent samples of the sequence $(X(k))_{1 \leq k \leq N}$ for each $1 \leq n \leq N$. To be clear, once a sample of the sequence $(X(k))_{1 \leq k \leq N}$ was drawn, it was used to compute the entire sequence $(A(n))_{1 \leq n \leq N}$. We present the results of this computation with $N= 2\times 10^4$ and $S = 5\times 10^7$ for all three types of random variables. To convey an idea of the computational resources utilized, our simulation for real standard normal variables with $N = 2\times 10^4$ consumed about 2 core-years using an average processor clock speed of 2.3 GHz.

\section{Results with Standard Complex Gaussians}  \label{sec:complex}
 Let $(X(k))_{k \geq 1}$ be  a sequence of independent standard complex Gaussians, and recall that $A(N)$ is defined by the power series identity \eqref{power_series}. For $N = 2 \times 10^4$, the distribution of the normalized variable $A(N)\cdot (\log(N))^{1/4}$ is displayed in Figure \ref{fig:distribution_clx} and the distribution of $|A(N)|\cdot (\log(N))^{1/4}$  is displayed in Figure \ref{fig:distribution_norm_clx}. Each figure was generated by Monte Carlo simulation with $4\times 10^6$ standard complex Gaussian sample sequences. While these do not address our central questions, we have included these visuals to provide an informal sense for the scale of our computation. 

 \begin{figure}
\centering

 \includegraphics[scale =0.75]{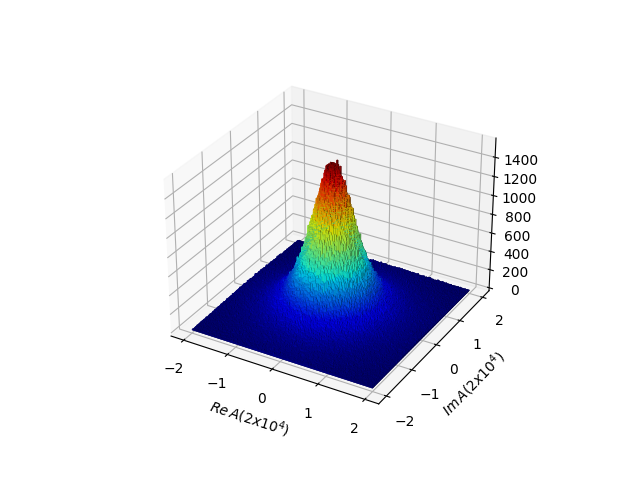}
    \caption{Distribution of $A(2\times 10^4)\cdot (\log(2\times 10^4))^{1/4}$ using $4\times 10^6$ complex Gaussian samples.}
    \label{fig:distribution_clx}

\end{figure}

 \begin{figure}
\centering

 \includegraphics[scale =0.6]{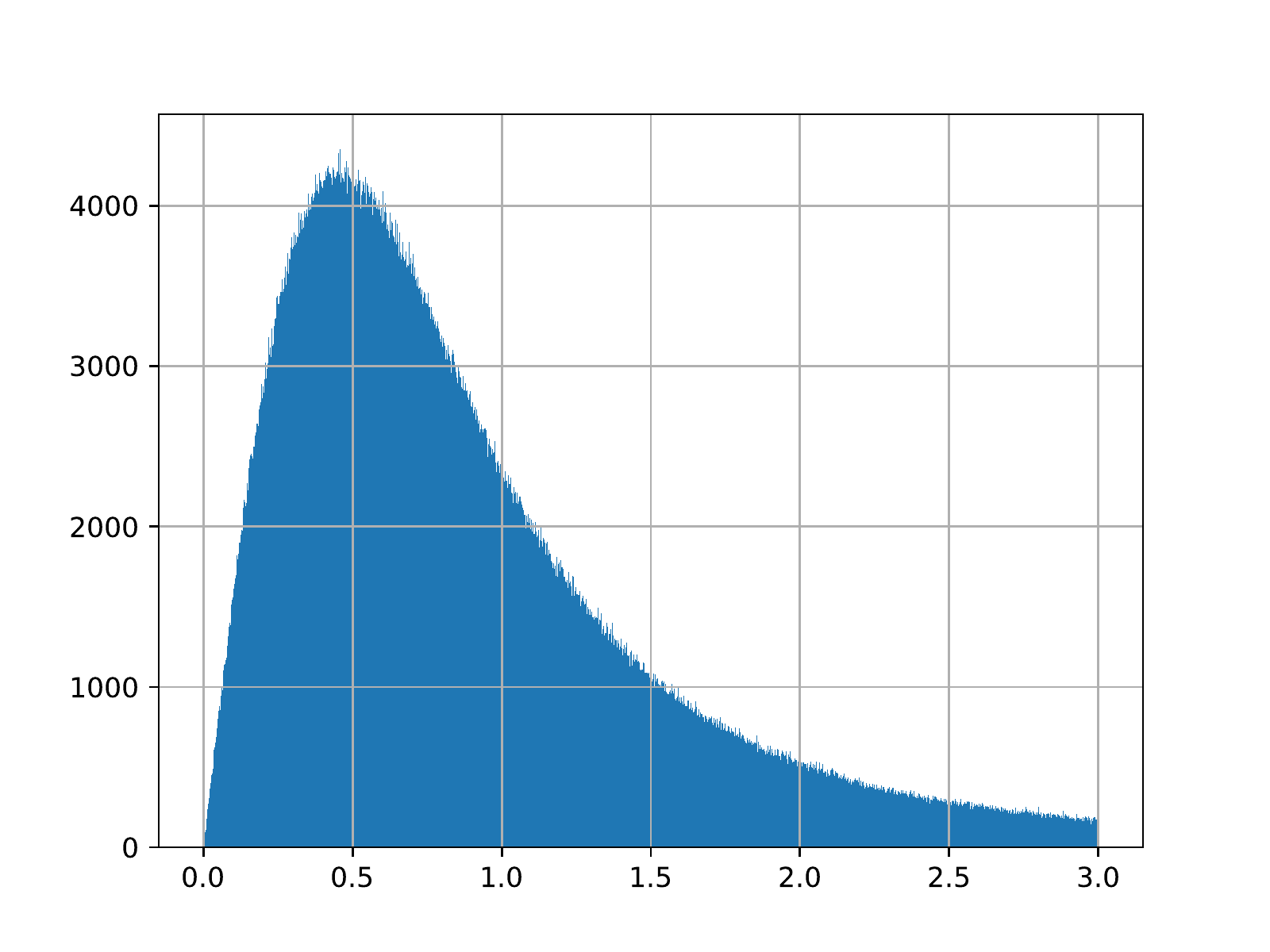}
    \caption{Distribution of $|A(2\times 10^4)|\cdot (\log(2\times 10^4))^{1/4} $ using $4\times 10^6$ complex Gaussian samples.}
    \label{fig:distribution_norm_clx}

\end{figure}
 
 For any $N \geq 1$, let $[|A(N)|]_{S}$ denote the sample mean of $|A(N)|$ generated with $S$ samples. In other words, if we let $\mathcal{S}$ denote a set of $S = |\mathcal{S}|$ sample sequences, then
 \[[|A(N)|]_{S} = \frac{1}{S}\sum_{X\in \mathcal{S}} |A(N;X)|.\]

\subsection{Analysis of \cref{conj:complex} for standard complex Gaussians}\label{subsec:analysis_complex}

To numerically verify Conjecture \ref{conj:complex}, we want to see that the product  $[|A(N)|]_S \cdot (\log N)^{1/4}$ approaches a constant for a sufficiently large $N$. For a sample mean generated with $ 5 \times 10^7$ samples, Figure \ref{fig:constant_complex_Gaussians} and Table \ref{table:constant_complex_Gaussians} show how $[|A(N)|]_{5 \times 10^7} \cdot (\log{N})^{1/4}$ changes for $10^3 \leq N \leq 20\times 10^3$. They both support the notion that $[|A(N)|]_{5 \times 10^7}  \cdot (\log{N})^{1/4}$ (and hence $\E[|A(N)|]$) is approaching a constant, thus providing evidence in favor of \cref{conj:complex}. From Table \ref{table:constant_complex_Gaussians}, the sample mean  $[|A(N)|]_{5 \times 10^7}  \cdot (\log{N})^{1/4}$  is fairly stable for $10^4 \leq N \leq 2 \times 10^4$ in the second decimal place with flucations in the third decimal place. Thus, $C \approx 1.07$ is the best estimate of the asymptotic constant in \cref{conj:complex} that our data provides. 

To obtain a more precise estimate of $C$, we would need to compute $[|A(N)|]_{S} $ for a much larger $N$. Unfortunately, generating data for larger values of $N$ requires computational resources beyond our availability, especially because a larger $N$ would need a larger sample size. 

\begin{figure}
\centering

 \includegraphics[scale =0.6]{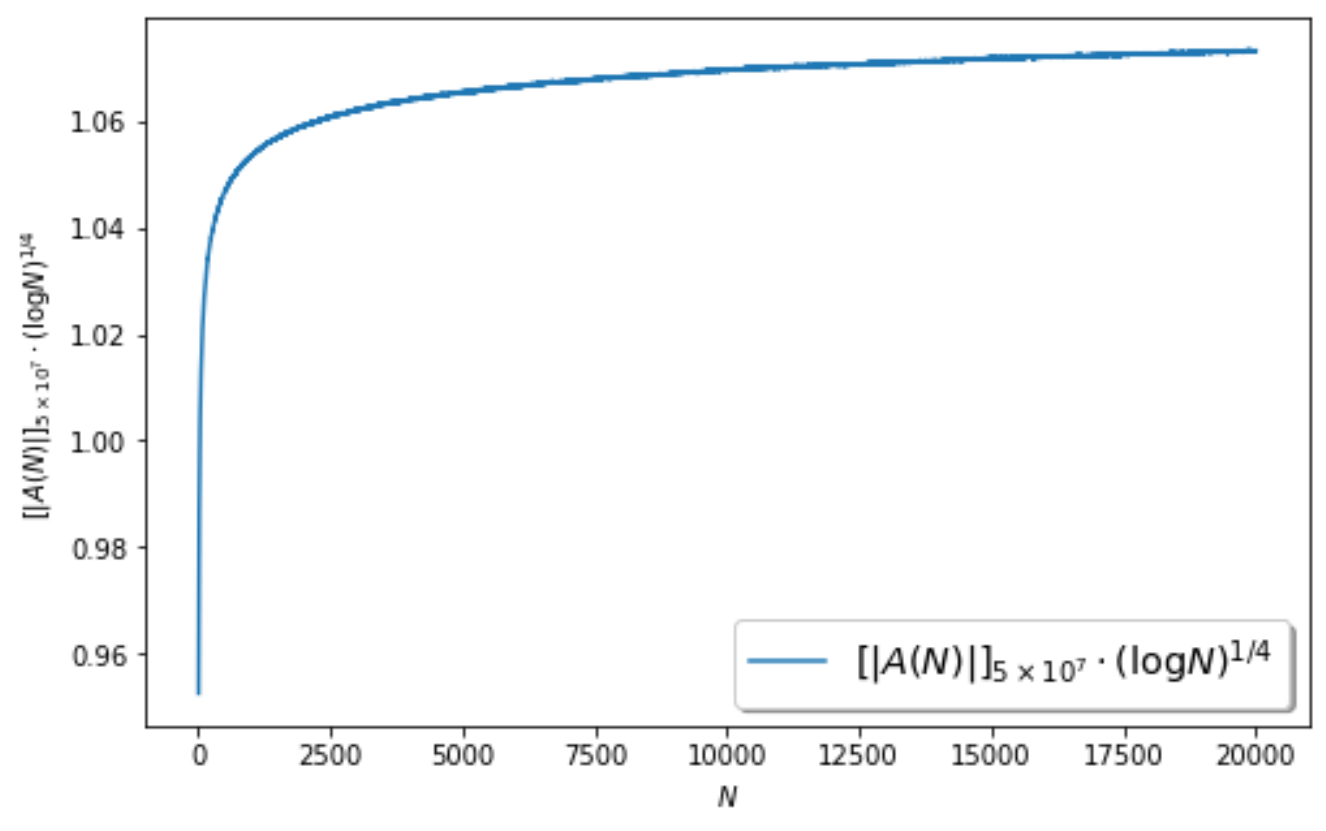}
    \caption{$[|A(N)|]_{5 \times 10^7} \cdot (\log{N})^{1/4}$ for $10 \leq N \leq 2\times10^4$.} 
    \label{fig:constant_complex_Gaussians}

\end{figure}

\begin{table}
\centering
 \begin{tabular}{| c | c || c | c |} 
 \hline 
 \\[-1em]
 $N $ & $[|A(N)|]_{5 \times 10^7}  \cdot (\log{N})^{1/4}$  & $N$ & $[|A(N)|]_{5 \times 10^7}  \cdot (\log{N})^{1/4}$ \\
 \hline \hline
 $1 \times 10^3$ & 1.0533 & $11 \times 10^3$ & 1.0700 \\
 \hline
 $2 \times 10^3$ & 1.0591 & $12 \times 10^3$ & 1.0706\\
 \hline
 $3 \times 10^3$ & 1.0620 & $13 \times 10^3$ & 1.0709 \\
  \hline
 $4 \times 10^3$ & 1.0640 & $14 \times 10^3$ & 1.0713 \\
 \hline
 $5 \times 10^3$ & 1.0653 & $15 \times 10^3$ & 1.0717 \\
 \hline 
  $6 \times 10^3$ & 1.0665 & $16 \times 10^3$ & 1.0721 \\
 \hline
 $7 \times 10^3$ & 1.0674 & $17 \times 10^3$ & 1.0723 \\
 \hline
 $8 \times 10^3$ & 1.0683 & $18 \times 10^3$ & 1.0726\\
  \hline
 $9 \times 10^3$ & 1.0690 & $19 \times 10^3$ & 1.0728 \\
 \hline
 $10 \times 10^3$ & 1.0695 & $20 \times 10^3$ & 1.0732 \\
 \hline
\end{tabular}
\caption{Values of $[|A(N)|]_{5 \times 10^7}  \cdot (\log{N})^{1/4}$ for $N \in \{i \times 10^3: 1 \leq i \leq 20\}$. }
\label{table:constant_complex_Gaussians}
\end{table}

\subsection{Robustness and sample size} \label{subsec:robust_complex} We share two results that suggest $[|A(N)|]_{5 \times 10^7} $ estimates $\mathbb{E}[\abs{A(N)}]$ accurately up to 3 decimal places with a high degree of certainty for all $1 \leq N \leq 2 \times 10^4$.  

First, we fix $N$ and study $[|A(N)|]_{S}$ as $S$ increases. Figure \ref{fig:smp_clx} shows the deviation of $[|A(2 \times 10^4)|]_{S}$ from the final sample mean for the last $10^6$ samples in a total sample size of $ 4\times 10^6$.  We see that for all $ 3\times 10^6 < S \leq 4\times 10^6$,
\begin{equation}
\label{eq:bounded_deviation}
     \abs{[|A(2\times 10^4)|]_{S}-[|A(2\times 10^4)|]_{4\times 10^6}} < 10^{-3}.
\end{equation}
 Even with a smaller sample size of $4\times 10^6$, the sample mean $[|A(2\times 10^4)|]_{S}$ appears to have stabilized in the first 3 decimal places, and indeed we find that \[\abs{[|A(2\times 10^4)|]_{5\times 10^7}- [|A(2\times 10^4)|]_{4\times 10^6}} < 10^{-3}.\]
This suggests that the data presented in Table \ref{table:constant_complex_Gaussians} is statistically significant up to 3 decimal places. 

\begin{figure}
\centering
\includegraphics[scale =0.6]{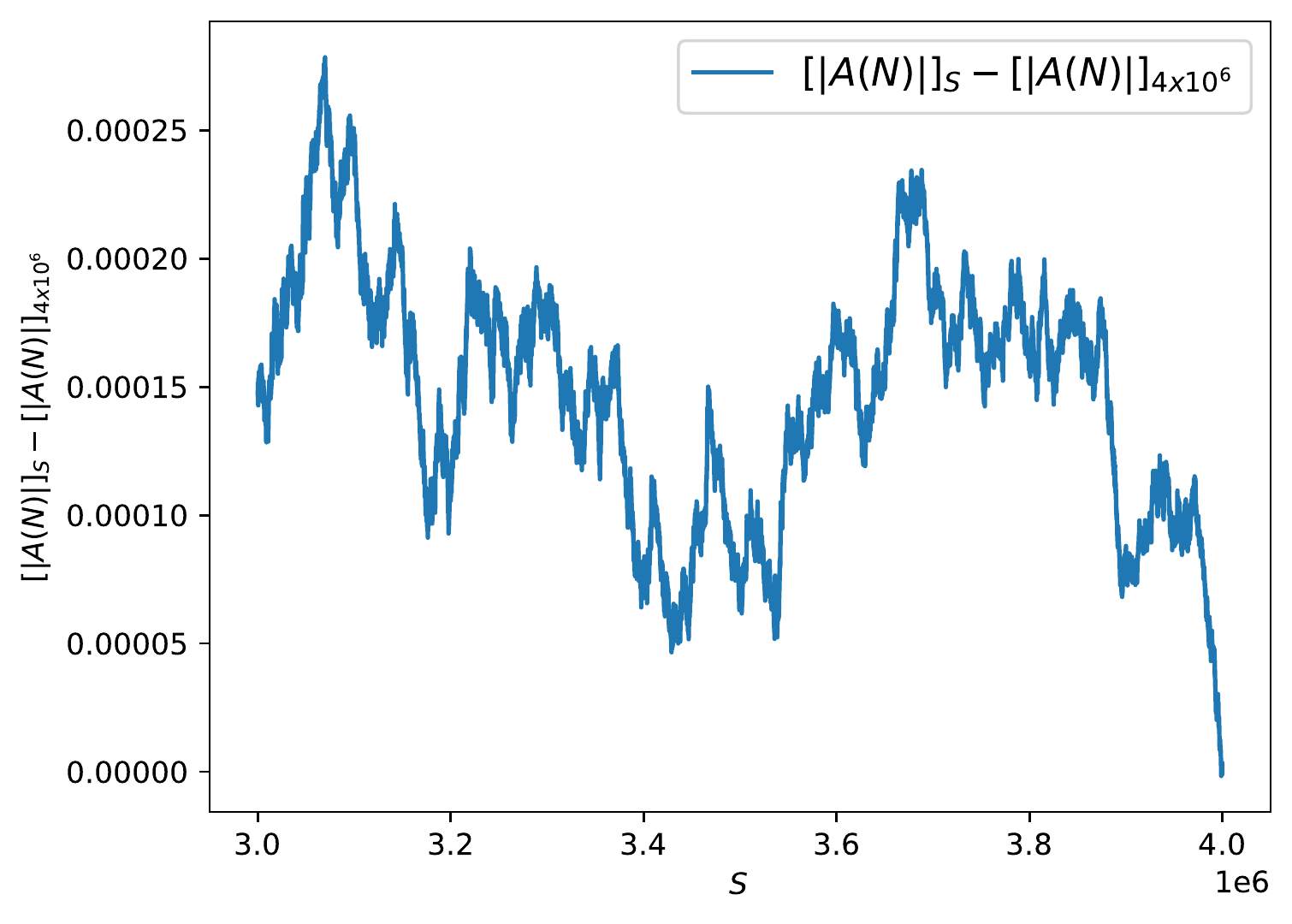}
\caption{ $[|A(N)|]_{S}-[|A(N)|]_{4 \times 10^6}$ for $N = 2\times 10^4$ and $3\times 10^6 <  S\leq 4 \times 10^6$ standard complex Gaussian samples.}
\label{fig:smp_clx}
\end{figure}

Next, for a fixed $S$, we study the deviation among sample means $[|A(N)|]_S$,  generated with 6 independent sets of samples sequences of the same size $S$. That is, we produced six different data sets with $S= 10^7$ samples and calculated the corresponding sample mean $[|A(N)|]_{S}$ for $1 \leq N \leq 2 \times 10^4$. For each $N$, let $[|A(N)|]_{10^7}^{(i)}$ denote the sample mean of $|A(N)|$ from the $i^{th}$ data set where $1 \leq i \leq 6$. We study the absolute deviation of $[|A(N)|]_{10^7}^{(i)}$ from $[|A(N)|]_{5 \times 10^7}$, which we treat as our benchmark. Denote this absolute deviation by $\delta_i$. That is, for each $i$ and $N$, define
\begin{equation}\label{absolute_deviation}
   \delta_i(N) = \Big|[|A(N)|]_{5 \times 10^7} - [|A(N)|]_{10^7}^{(i)}\Big|. 
\end{equation}
Figure \ref{fig:deviation_complex_Gaussians} shows the values of $\delta_i(N)$ for $ 1 \leq i \leq 6$ and $1 \leq N \leq 2 \times 10^6$ and Table \ref{table:deviation_complex_Gaussians} provides more detailed statistics for $\delta_i$. We observe that, except for a few points in the third data set $(i=3)$,  $\delta_i(N) < 10^{-3}$ for all $1 \leq i \leq 6$ and $1 \leq N \leq  2 \times 10^{4}$.
So, broadly speaking, we can infer that all six data sets are essentially identical up to three decimal digits. Since each of these data sets are generated with a sample size of $10^7$,  this strongly suggests that, for all $1 \leq N \leq 2 \times 10^4$, the sample mean $[|A(N)|]_{10^7}$ estimates  $\mathbb{E}[|A(N)|]$ accurately up to three decimal digits  with high degree of certainty. Therefore, we can effectively conclude that $[|A(N)|]_{5 \times 10^7} $, used in our analysis in the preceding subsection, is significant up to three decimal digits.

\begin{figure}[h]
\centering
\includegraphics[scale =0.6]{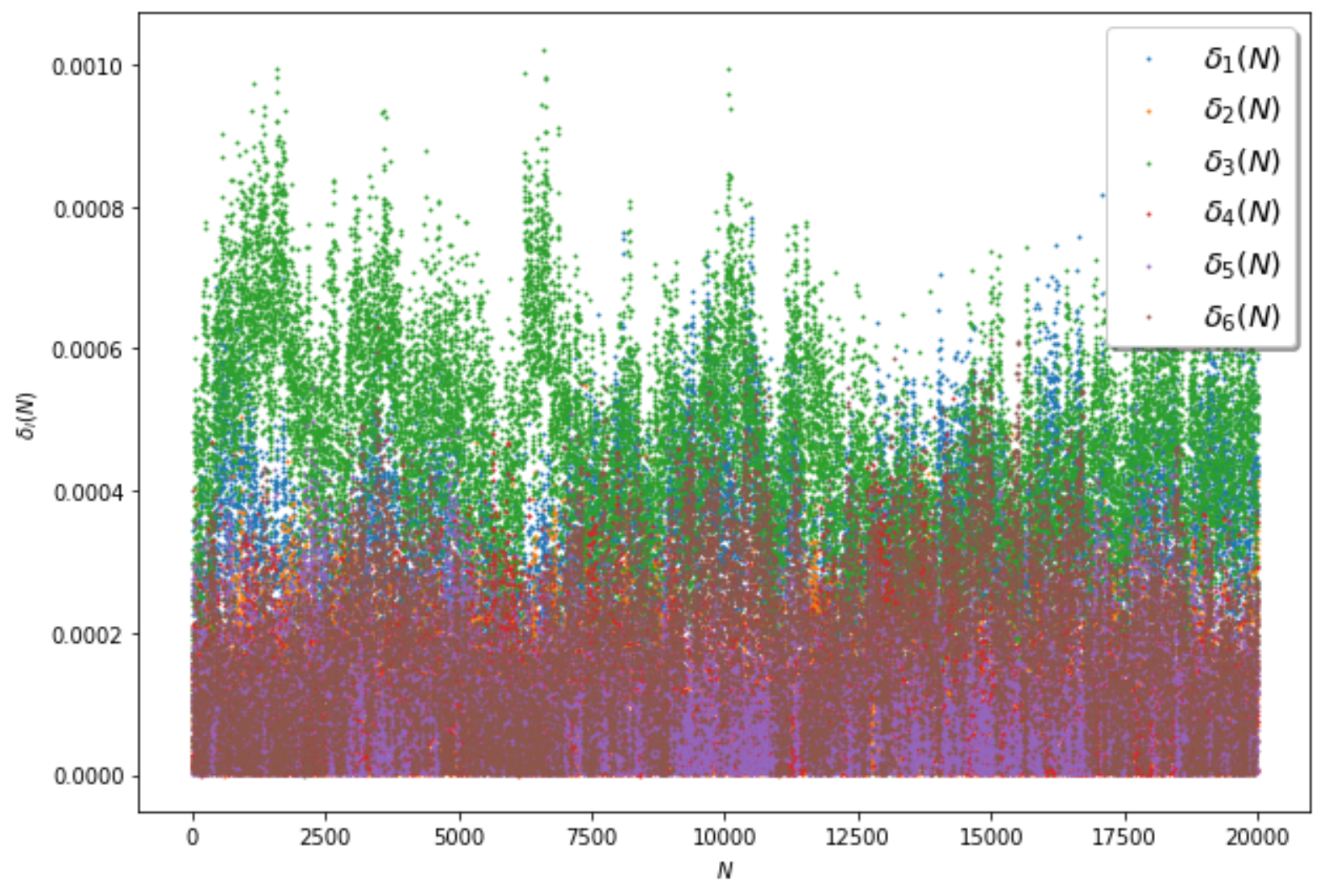}
    \caption{$\delta_i(N)$ for $1 \leq i \leq 6$ and $1 \leq N \leq 2 \times 10^4$.}
    \label{fig:deviation_complex_Gaussians}

\end{figure}

 \begin{table}[h]
 \begin{tabular}{|c | c | c | c |} 
\hline 
\\[-1em]

$i$ & $\text{Average of}\,\{\delta_i(N)\}_{N=1}^{2 \times 10^4}$& $\text{Maximum of}\,\{\delta_i(N)\}_{N=1}^{2 \times 10^4}$ & $\%$ of $N'$s with  $\delta_i(N) \geq 10^{-3}$ \\ 
 \hline \hline

1 & 0.0003 &  0.0008 & 0.0 \\
\hline
2 & 0.0001 &  0.0005 & 0.0 \\
\hline
3 & 0.0005 &  0.0010 & 0.005 \\
\hline
4 & 0.0001 &  0.0005 & 0.0 \\
\hline
5 & 0.0001 &  0.0005 & 0.0 \\
\hline
6 & 0.0002 &  0.0006 & 0.0 \\
\hline

\end{tabular}
\caption{Descriptive statistics for $\delta_i(N)$ for each $1 \leq i \leq 6$.}
\label{table:deviation_complex_Gaussians}
\end{table}

\section{Results with Standard Real Gaussians} \label{sec:real}

Let $(X(k))_{ k \geq 1}$ be a sequence of independent standard real Gaussians. Define the sequence of random variables 
$(A(N))_{N \geq 0}$ by the power series identity \eqref{power_series}, and let $[|A(N)|]_S $ denote the sample mean of $|A(N)|$ generated with $S$ samples. Figure \ref{fig:distribution_real} shows the distribution of $|A(2\times 10^4)|\cdot (\log(2\times 10^4))^{1/4}$ for a Monte Carlo simulation using $4\times 10^6$ real Gaussian sample sequences.

 \begin{figure}[h]
\centering
 \includegraphics[scale =0.6]{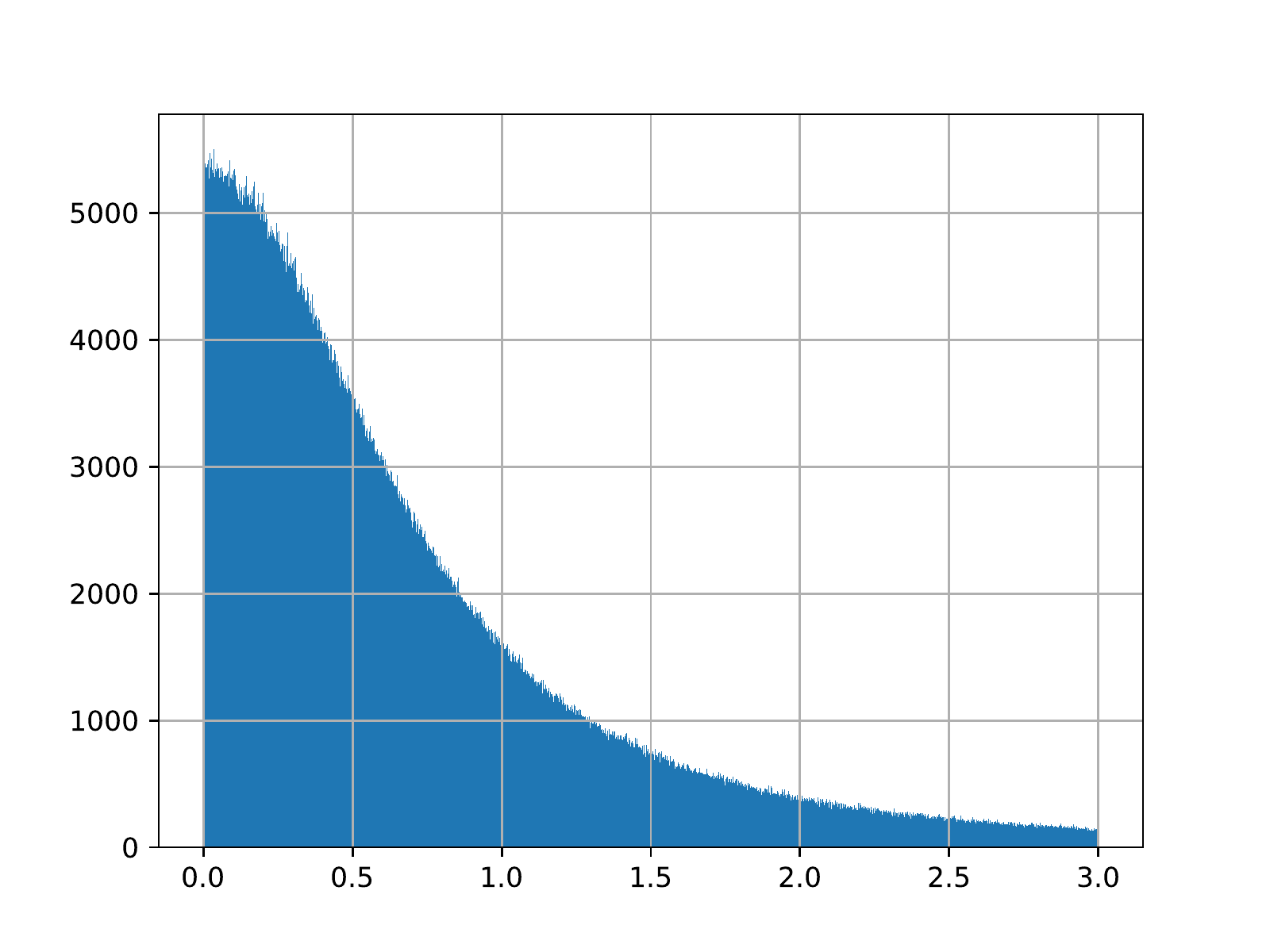}
    \caption{Distribution of $|A(2\times 10^4)|\cdot(\log(2\times 10^4))^{1/4}$ using $4\times 10^6$ real Gaussian samples.}
    \label{fig:distribution_real}
\end{figure}

\subsection{Analysis of \cref{conj:other} for standard real Gaussians} \label{subsec:analysis_real}

Like in \cref{sec:complex}, we approximate $\E[|A(N)|]$ with the sample mean $ [|A(N)|]_{5 \times 10^7}$ for $10 \leq N \leq 2 \times 10^4$. From Figure \ref{fig:constant_real_Gaussians} and Table \ref{table:constant_real_Gaussians}, we see that $[|A(N)|]_{5 \times 10^7}  \cdot (\log{N})^{1/4}$  is approaching a constant. Thus, our data  suggests that \cref{conj:other} holds for standard real Gaussians and that $C \approx 0.957$ based on Table \ref{table:constant_real_Gaussians}.

\begin{figure}[h]
\centering

 \includegraphics[scale =0.6]{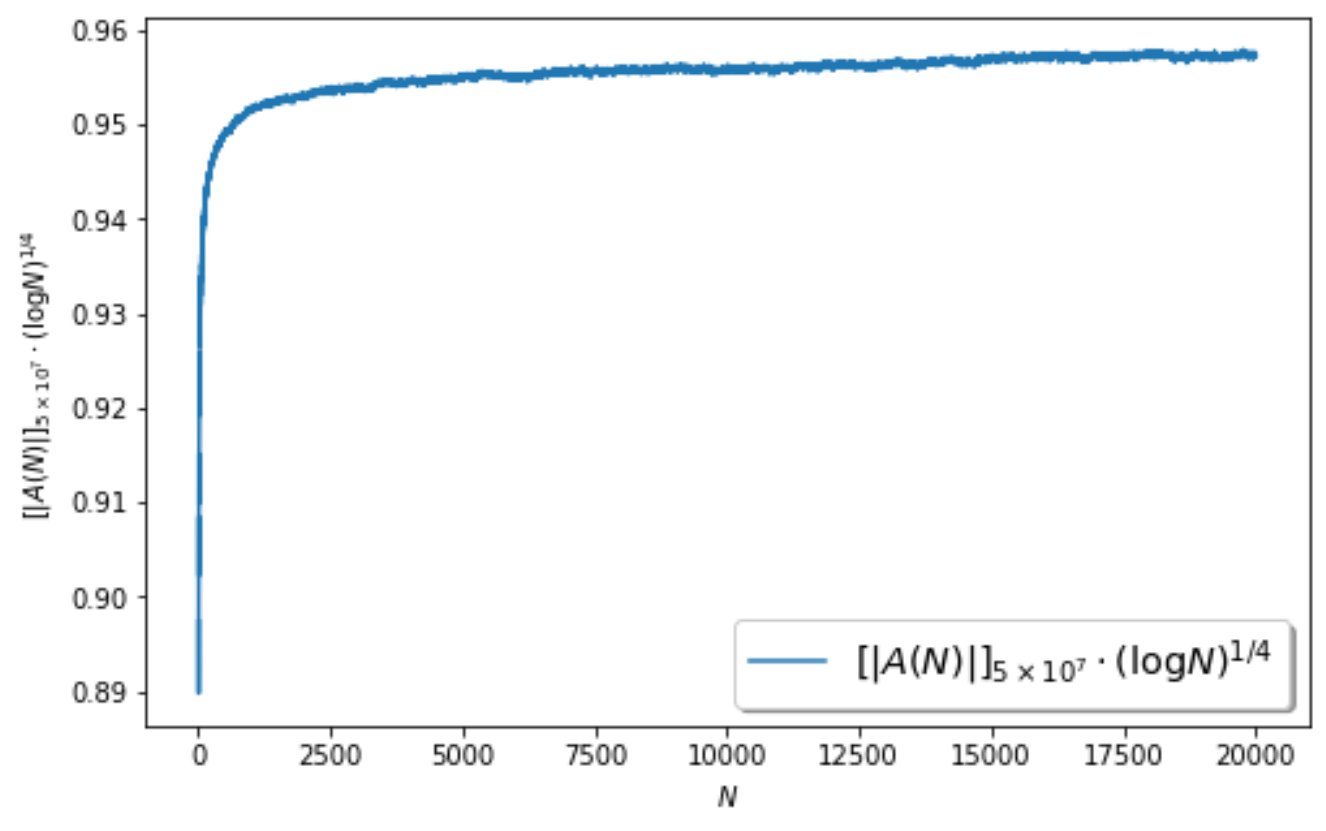}
    \caption{$[|A(N)|]_{5 \times 10^7} \cdot (\log{N})^{1/4}$ for $10 \leq N \leq 2\times10^4$.}
    \label{fig:constant_real_Gaussians}

\end{figure}

\begin{table}[h]
\centering
 \begin{tabular}{| c | c || c | c |} 
 \hline 
 \\[-1em]
 $N$ & $[|A(N)|]_{5 \times 10^7}  \cdot (\log{N})^{1/4}$  & $N$ & $[|A(N)|]_{5 \times 10^7}  \cdot (\log{N})^{1/4}$ \\
 \hline \hline
 $1 \times 10^3$ & 0.9517 & $11 \times 10^3$ & 0.9562 \\
 \hline
 $2 \times 10^3$ & 0.9528 & $12 \times 10^3$ & 0.9558 \\
 \hline
 $3 \times 10^3$ & 0.9543 & $13 \times 10^3$ & 0.9564\\
  \hline
 $4 \times 10^3$ & 0.9546 & $14 \times 10^3$ & 0.9566 \\
 \hline
 $5 \times 10^3$ & 0.9547 & $15 \times 10^3$ & 0.9570 \\
 \hline 
 $6 \times 10^3$ & 0.9552 & $16 \times 10^3$ & 0.9571  \\
 \hline
 $7 \times 10^3$ & 0.9555 & $17 \times 10^3$ & 0.9572 \\
 \hline
 $8 \times 10^3$ & 0.9559 & $18 \times 10^3$ & 0.9574 \\
  \hline
 $9 \times 10^3$ & 0.9561 & $19 \times 10^3$ & 0.9572 \\
 \hline
 $10 \times 10^3$ & 0.9559 & $20 \times 10^3$ & 0.9572 \\
 \hline
\end{tabular}
~\\
\caption{Values of $[|A(N)|]_{5 \times 10^7}  \cdot (\log{N})^{1/4}$ for $N \in \{i \times 10^3: 1 \leq i \leq 20\}.$ }
\label{table:constant_real_Gaussians}
\end{table}

 Notice that this is more precise than our estimation for the asymptotic constant in case of complex Gaussians. Recall that our data for complex Gaussians in \cref{subsec:analysis_complex} suggested that the values $[|A(N)|]_{5 \times 10^7} \cdot (\log N)^{1/4}$ in Table \ref{table:constant_complex_Gaussians} (and hence $\mathbb{E}[|A(N)|]$ itself)  had only converged to within 2 decimal places for $10^4 \leq N \leq 2 \times 10^4$. In this case, however,   the values $[|A(N)| ]_{5 \times 10^7}\cdot (\log N)^{1/4}$ in Table \ref{table:constant_real_Gaussians} are fairly stable for $10^4 \leq N \leq 2 \times 10^4$ in the third decimal place with fluctuations mostly in the fourth decimal place.

\subsection{Robustness and sample size} \label{subsec:robust_real} As before, we share two tests that suggests the sample mean $[|A(N)|]_{5 \times 10^7} $ estimates $\mathbb{E}[\abs{A(N)}]$ accurately up to 3 decimal places with a high degree of certainty for all $1 \leq N \leq 2 \times 10^4$. 

First, we fix $N = 2 \times 10^4$ and vary $S$ to study the deviation of $[|A(N)|]_{S}$ from the final sample mean  $[|A(N)|]_{4 \times 10^6}$ in Figure \ref{fig:smp_real}.  We observe that \eqref{eq:bounded_deviation} holds here too, so our data for the sample means $[|A(N)|]_{5\times 10^7}$, and thus Table \ref{table:constant_real_Gaussians}, is significant up to $3$ decimal places with high certainty.

\begin{figure}[h]
\centering
\includegraphics[scale =0.6]{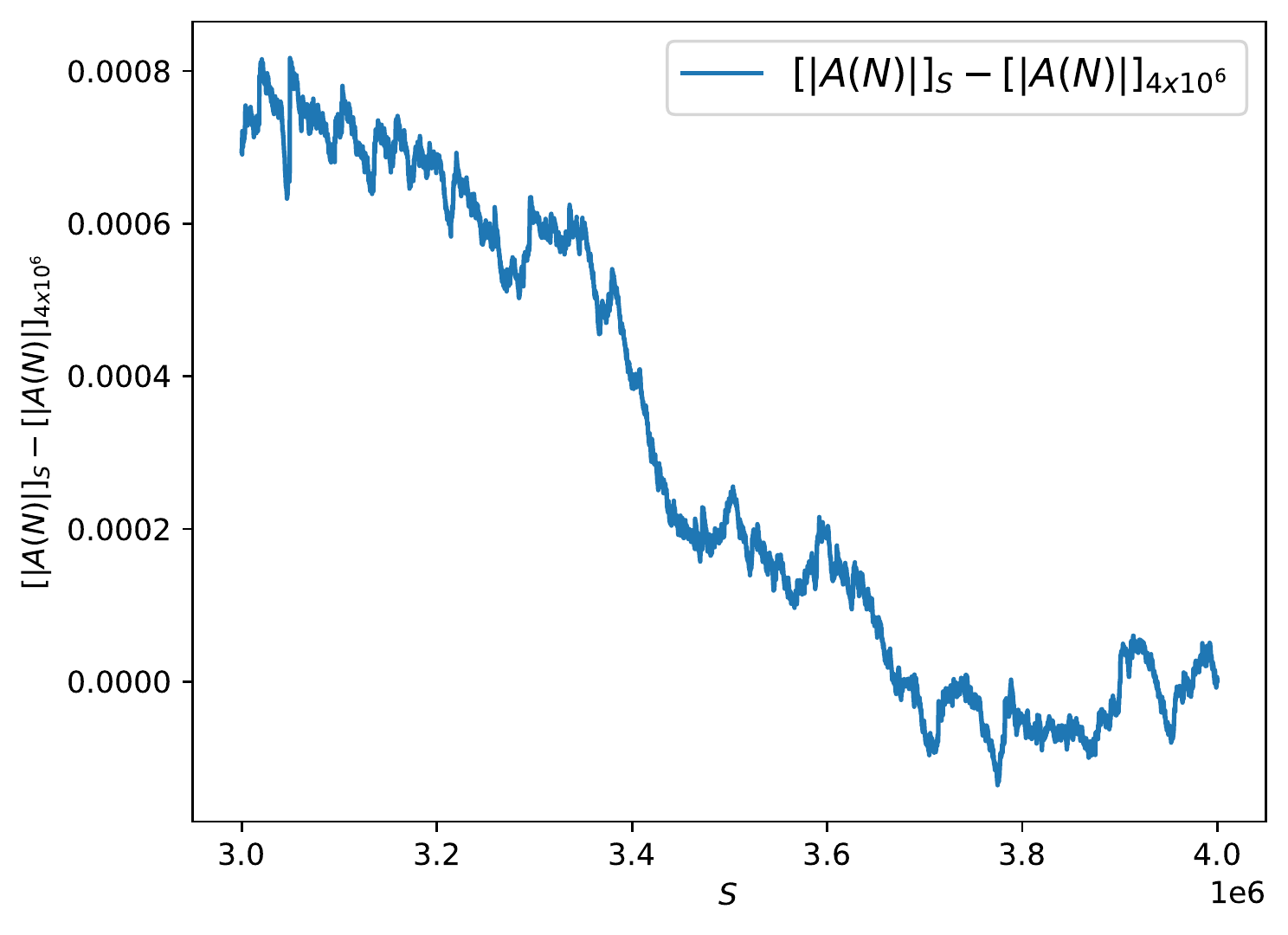}
\caption{ $[|A(N)|]_{S}-[|A(N)|]_{4 \times 10^6}$ for $N = 2\times 10^4$ and $3\times 10^6 <  S\leq 4 \times 10^6$.}
\label{fig:smp_real}
\end{figure}

Second, we study the deviation among 6 different sets of independent samples. Each data set calculates the sample mean $[|A(N)|]_{10^7}$ for $1 \leq N \leq 2 \times 10^4$. Adopting the notation from \cref{subsec:robust_complex}, let $[|A(N)|]_{10^7}^{(i)}$ denote the sample mean of $|A(N)|$ from the $i^{\text{th}}$ data set where $1 \leq i \leq 6$. Using $[|A(N)|]_{5 \times 10^7}$ as our benchmark, define $\delta_i(N)$ by \eqref{absolute_deviation} as the absolute value of the deviation. Figure \ref{fig:deviation_real_Gaussians} shows the values of $\delta_i(N)$ for $ 1 \leq i \leq 6$ and $1 \leq N \leq 2 \times 10^6$, and Table \ref{table:deviation_real_Gaussians} presents associated statistics. They show that $\delta_i(N) < 10^{-3}$ for almost all $N$ for $i \in \{1,2,4\}$.  However,  $\delta_i(N) \geq 10^{-3}$ for a considerable proportion of $N'$s for $i \in \{3, 5, 6\}$.  This suggests that we need a sample size larger than $10^{7}$ to recover three decimal points of $\mathbb{E}[|A(N)|]$ with high degree of certainty. Nevertheless, the average deviation is less than $10^{-3}$ and the maximum is less than $3 \times 10^{-3}$ in all six data sets. Thus, when the sample size is substantially increased from $10^7$ to $5 \times 10^{7}$, it is reasonable to conclude that the sample mean $[|A(N)|]_{5 \times 10^7}$ estimates $\mathbb{E}[|A(N)|]$ up to three decimal places with a much higher degree of certainty for all $1 \leq N \leq 2\times 10^4$.

\begin{figure}[h]
\centering
\includegraphics[scale =0.6]{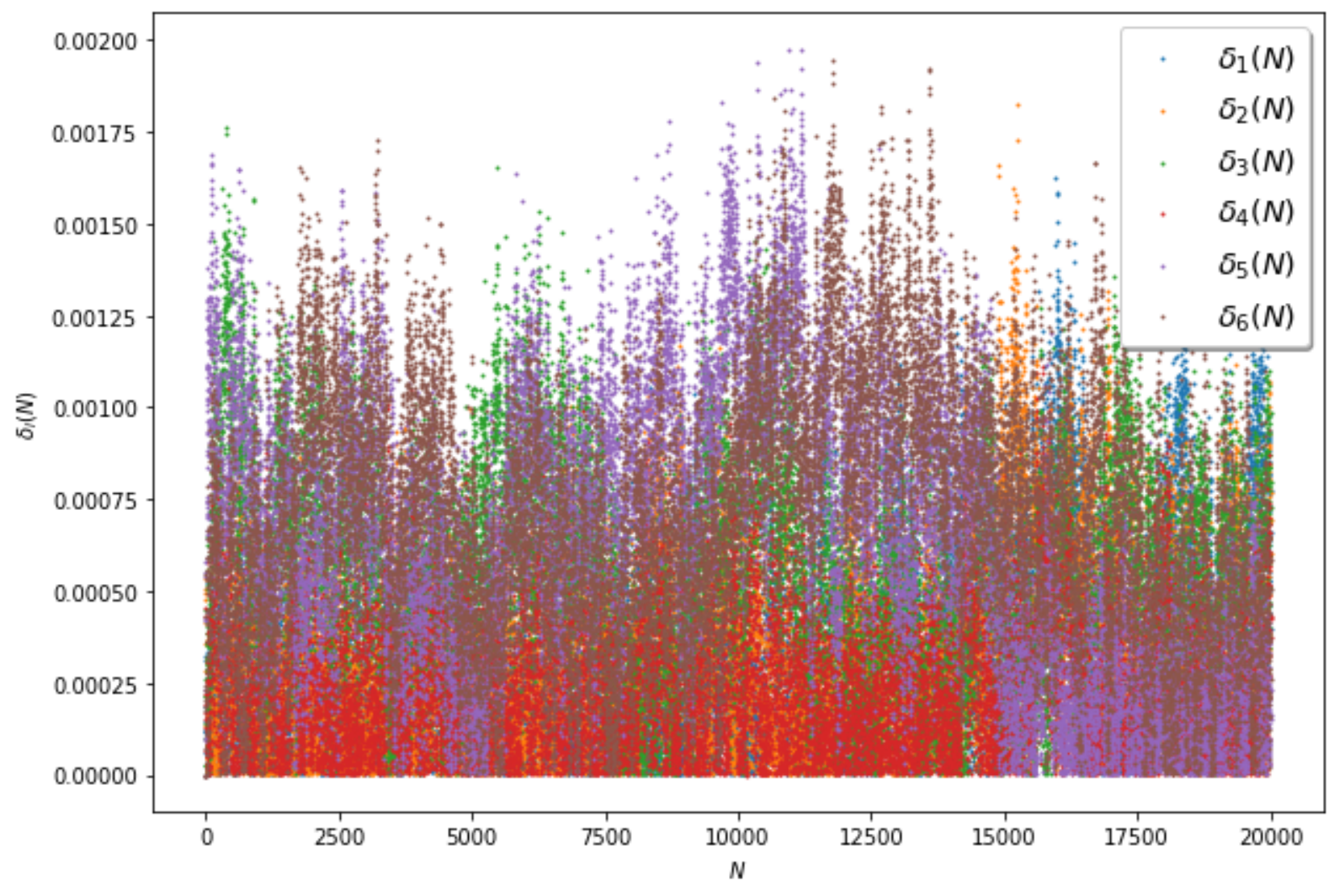}
    \caption{$\delta_i(N)$ for $1 \leq i \leq 6$ and $1 \leq N \leq 2 \times 10^4$.}
    \label{fig:deviation_real_Gaussians}

\end{figure}

\begin{table}[h]
\begin{tabular}{|c | c | c | c |} 
\hline 
\\[-1em]

$i$ & $\text{Average of}\,\{\delta_i(N)\}_{N=1}^{2 \times 10^4}$& $\text{Maximum of}\,\{\delta_i(N)\}_{N=1}^{2 \times 10^4}$ & $\%$ of $N'$s with  $\delta_i(N) \geq 10^{-3}$ \\ 
 \hline \hline

1 & 0.0003& 0.0016 & 1.780 \\
\hline
2 & 0.0003 & 0.0018 & 1.385 \\
\hline
3 & 0.0005 & 0.0018 & 6.375 \\
\hline
4 & 0.0003 & 0.0011 & 0.055 \\
\hline
5 & 0.0006 & 0.0020 & 18.74\\
\hline
6 & 0.0007 & 0.0019 & 19.69 \\
\hline

 \hline

\end{tabular} 
\medskip
\caption{Descriptive statistics for $\delta_i(N)$ for each $1 \leq i \leq 6$.}
\label{table:deviation_real_Gaussians}
\end{table}

\section{Results with \texorpdfstring{$\pm 1$}{} Random Variables}\label{sec:pm1}

Let $(X(k))_{k  \geq 1}$ be a sequence of independent   random variables uniform on $\{ \pm 1\}$ and, as usual, define $(A(N))_{N \geq 0}$  by the power series identity \eqref{power_series}. Again, $[|A(N)|]_S $ denotes the sample mean of $|A(N)|$ generated with $S$ samples. Figure \ref{fig:distribution_pm1} shows the distribution of $A(2\times 10^4)\cdot (\log(2\times 10^4))^{1/4}$ for a Monte Carlo simulation using $4\times 10^6$ real Gaussian sample sequences.

 \begin{figure}
\centering
 \includegraphics[scale =0.6]{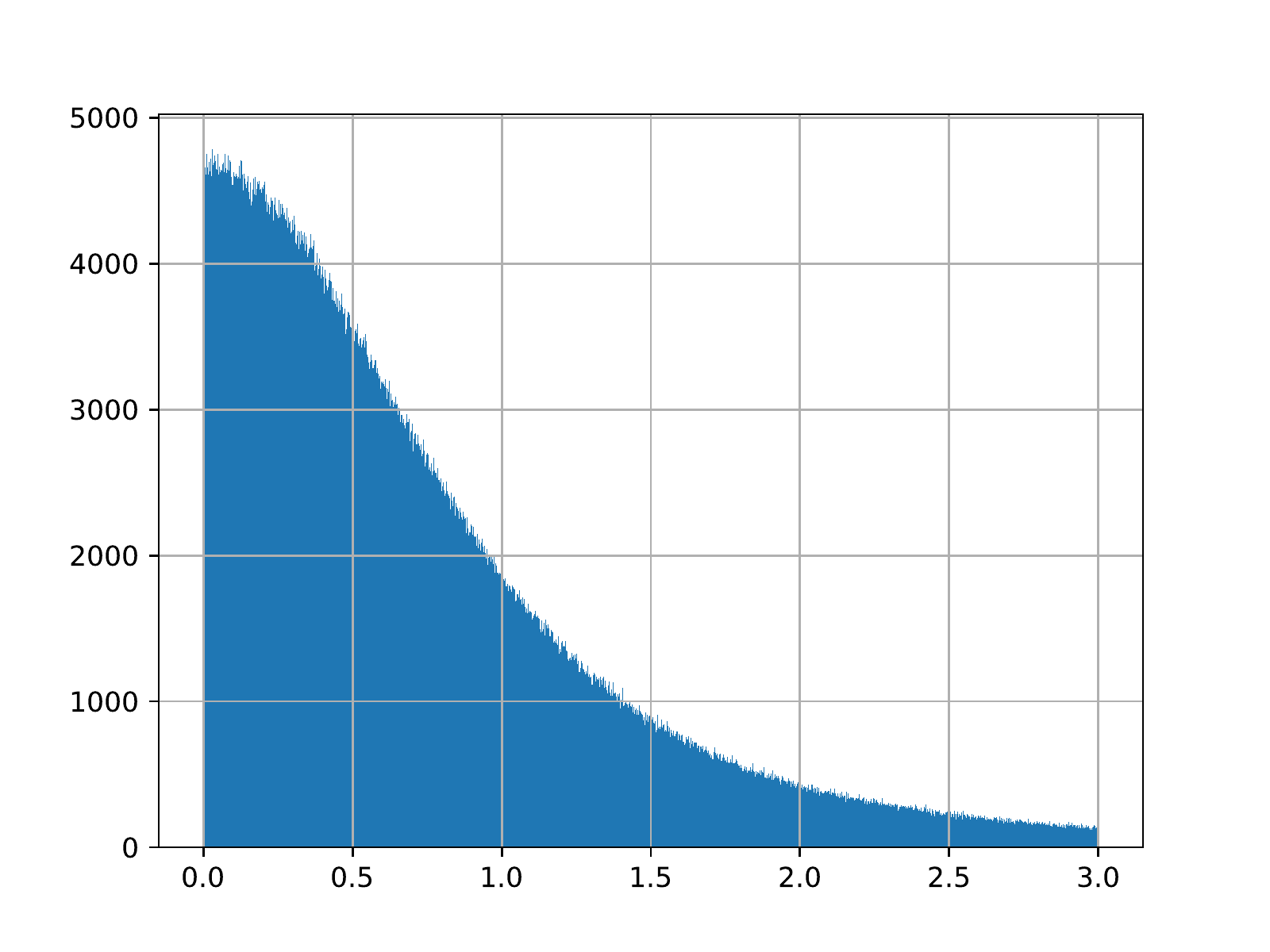}
    \caption{Distribution of $|A(2\times 10^4)|\cdot (\log(2\times 10^4))^{1/4} $ using $4\times 10^6$  samples of $\pm 1$ variables.}
    \label{fig:distribution_pm1}
\end{figure}

\subsection{Analysis of \cref{conj:other} for $\pm1$ variables} As in \cref{subsec:analysis_complex} and \ref{subsec:analysis_real}, we approximate $\E[|A(N)|]$ with the sample mean $ [|A(N)|]_{5 \times 10^7}$ for $1 \leq N \leq 2 \times 10^4$. From Figure \ref{fig:constant_pm1} and Table \ref{table:constant_pm1}, we observe that $[|A(N)|]_{5 \times 10^7}  \cdot (\log{N})^{1/4}$  is approaching a constant. Therefore, our data supports \cref{conj:other} for $\pm 1$ variables and   we estimate the  corresponding asymptotic constant to be $C \approx 0.896$ based on Table \ref{table:constant_pm1}.

\begin{figure}[h]
\centering

 \includegraphics[scale =0.6]{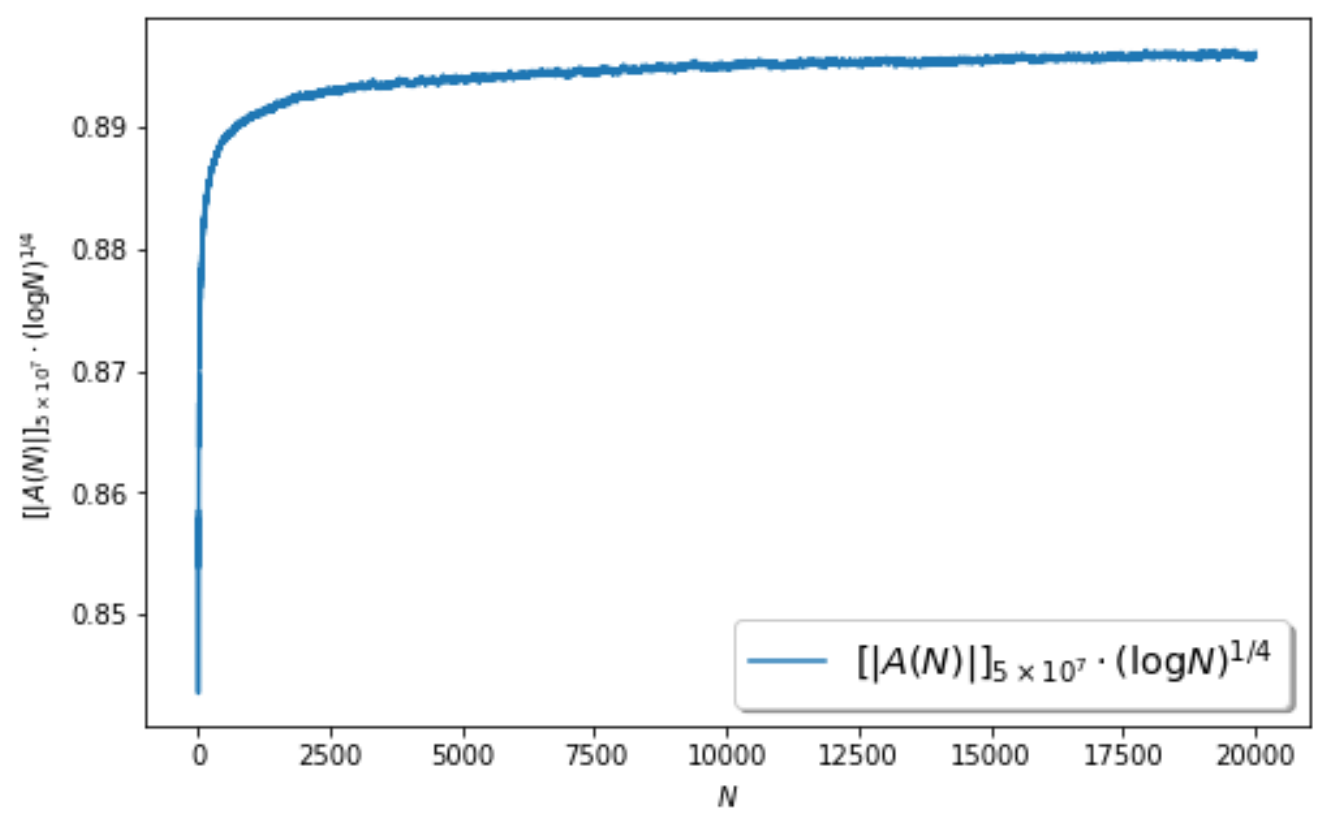}
    \caption{$[|A(N)|]_{5 \times 10^7} \cdot (\log{N})^{1/4}$ for $10 \leq N \leq 2\times10^4$.}
    \label{fig:constant_pm1}

\end{figure}

\begin{table}[h]
\centering
 \begin{tabular}{| c | c || c | c |} 
 \hline 
  $N$ & $[|A(N)|]_{5 \times 10^7}  \cdot (\log{N})^{1/4}$  & $N$ & $[|A(N)|]_{5 \times 10^7}  \cdot (\log{N})^{1/4}$ \\
 \hline 
 $1 \times 10^3$ & 0.8906 & $11 \times 10^3$ & 0.8949 \\
 \hline
 $2 \times 10^3$ & 0.8924 & $12 \times 10^3$ & 0.8952 \\
 \hline
$3 \times 10^3$ & 0.8931 & $13 \times 10^3$ & 0.8954\\
  \hline
 $4 \times 10^3$ & 0.8936 & $14 \times 10^3$ & 0.8952 \\
 \hline
 $5 \times 10^3$ & 0.8939 & $15 \times 10^3$ & 0.8954 \\
 \hline 
  $6 \times 10^3$ & 0.8943 & $16 \times 10^3$ & 0.8958  \\
 \hline
 $7 \times 10^3$ & 0.8944 & $17 \times 10^3$ & 0.8958 \\
 \hline
 $8 \times 10^3$ & 0.8946 & $18 \times 10^3$ & 0.8957 \\
  \hline
 $9 \times 10^3$ & 0.8946 & $19 \times 10^3$ & 0.8960 \\
 \hline
 $10 \times 10^3$ & 0.8947 & $20 \times 10^3$ & 0.8960 \\
 \hline
\end{tabular}
\caption{Values of $[|A(N)|]_{5 \times 10^7}  \cdot (\log{N})^{1/4}$ for $N \in \{i \times 10^3: 1 \leq i \leq 20\}.$ }
\label{table:constant_pm1}
\end{table}

\subsection{Robustness and sample size}
Again, we may reasonably infer that $[|A(n)|]_{5 \times 10^7} $ estimates $\mathbb{E}[\abs{A(n)}]$ accurately up to 3 decimal places with a high degree of certainty for all $1 \leq n \leq 2 \times 10^4$. We demonstrate this feature with two tests.

First, by fixing $N = 2\times 10^4$, we can see how the deviation $[|A(N)|]_{S}-[|A(N)|]_{4\times 10^6}$ evolves in Figure \ref{fig:smp_pm1} as $S$ grows. We conclude that \eqref{eq:bounded_deviation} holds as before, which supports our claim that the data presented in Table \ref{table:constant_pm1} is likely significant in 3 decimal places.

\begin{figure}[h]
\centering
\includegraphics[scale =0.55]{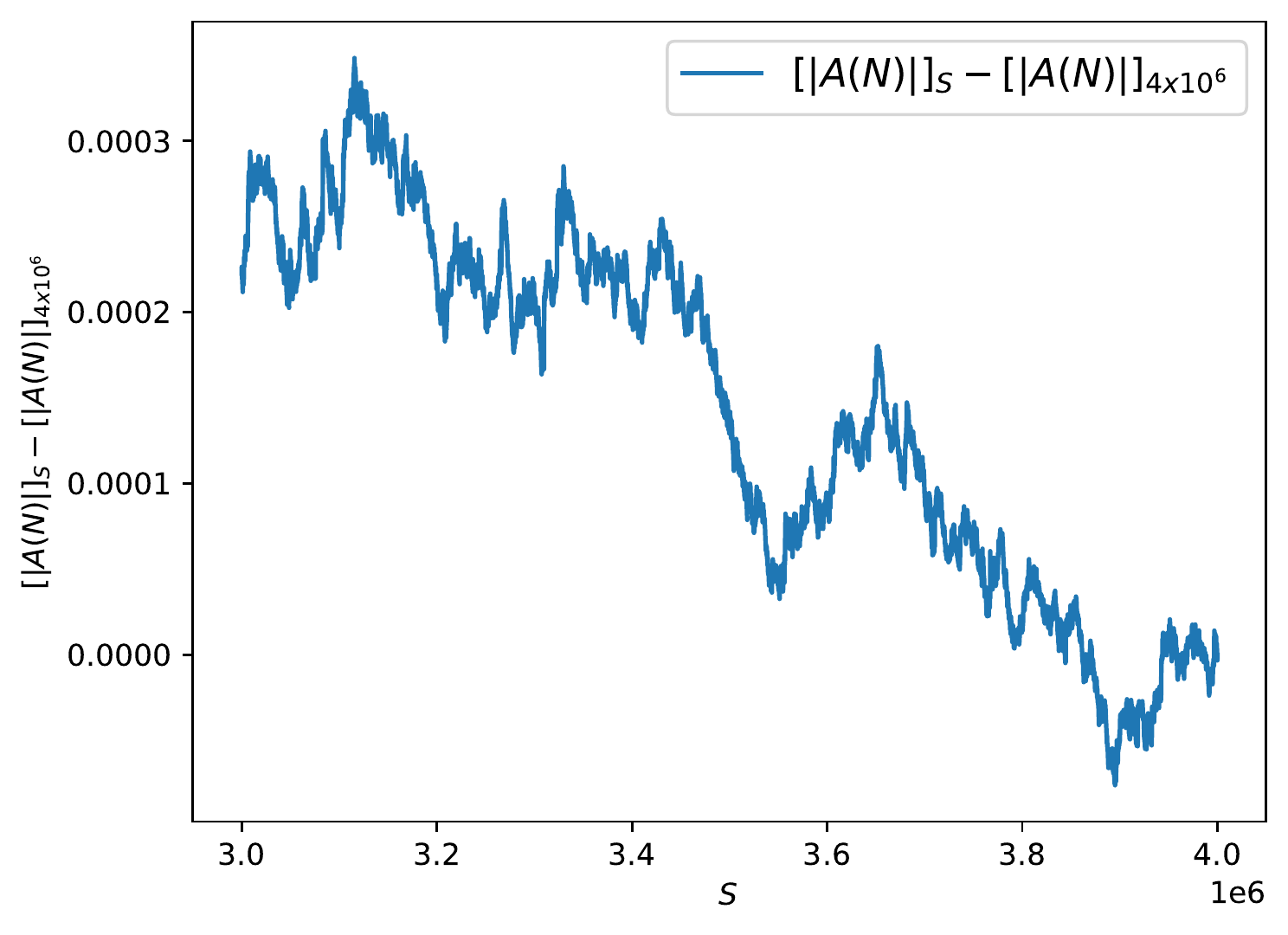}
\caption{ $[|A(N)|]_{S}-[|A(N)|]_{4\times 10^6}$ for $N = 2\times 10^4$, and $3\times 10^6 <  S\leq 4 \times 10^6$.}
\label{fig:smp_pm1}
\end{figure}

Second, we again study the deviation across six different data sets of sample means produced with $10^7$ samples. For $1 \leq i \leq 6 $ and $1 \leq N \leq 2 \times 10^4$, let $[|A(N)|]_{10^7}^{(i)}$ denote the sample mean of $|A(N)|$ from $i^{th}$ data set. Then $\delta_i(N)$, defined as usual by \eqref{absolute_deviation}, is shown in Figure \ref{fig:deviation_pm1} and its statistics are listed in Table \ref{table:deviation_pm1}. Notice that, for every $i$, $\delta_i(N) < 10^{-3}$ for almost all $N$. This supports the claim that $[|A(N)|]_{10^7}$ estimates  $\mathbb{E}[|A(N)|]$ accurately up to three decimal digits  with a high degree of certainty. Consequently, it is reasonable to infer that the data presented in Figure \ref{fig:constant_pm1} and Table \ref{table:constant_pm1} is significant up to three decimal digits. 

\begin{figure}[h]
\centering
\includegraphics[scale =0.6]{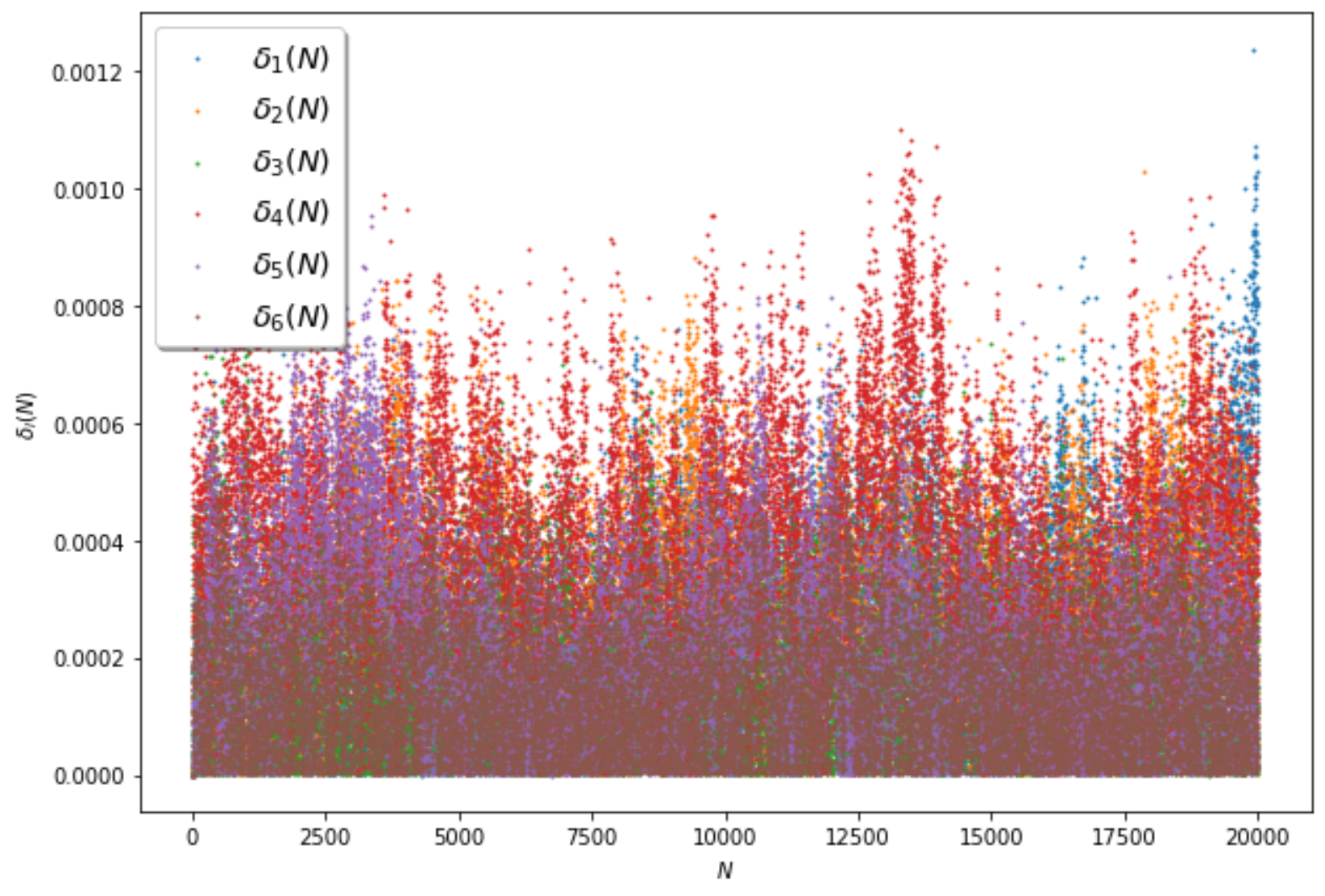}
    \caption{$\delta_i(N)$ for $1 \leq i \leq 6$ and $1 \leq N \leq 2 \times 10^4$.}
    \label{fig:deviation_pm1}

\end{figure}

 \begin{table}[h]
\begin{tabular}{|c | c | c | c |} 
\hline 
\\[-1em]

$i$ & $\text{Average of}\,\{\delta_i(N)\}_{N=1}^{2 \times 10^4}$& $\text{Maximum of}\,\{\delta_i(N)\}_{N=1}^{2 \times 10^4}$ & $\%$ of $N'$s with  $\delta_i(N) \geq 10^{-3}$ \\ 
 \hline \hline

1 & 0.0002& 0.0012 & 0.045 \\
\hline
2 & 0.0003 & 0.0010 & 0.005 \\
\hline
3 & 0.0002 & 0.0009 & 0.0 \\
\hline
4 & 0.0004 & 0.0011 & 0.085 \\
\hline
5 & 0.0002 & 0.00095 & 0.0\\
\hline
6 & 0.0002 & 0.0008 & 0.0 \\
\hline

 \hline

\end{tabular}
~\\
\caption{Descriptive statistics for $\delta_i(N)$ for each $1 \leq i \leq 6$.}
\label{table:deviation_pm1}
\end{table}

\section{Determining a product structure for the asymptotic constants} \label{Euler_product_conjecture}

Given that several number theoretic constants have Euler product expansions, it is natural to consider whether such an expansion might exist for the constant $C$ of Conjectures \ref{conj:complex} and \ref{conj:other}. This is the content of Questions \ref{q:complex}  and \ref{q:other}. The purpose of this section is to investigate these questions. 

It seems plausible that the answers to \cref{q:complex} and \cref{q:other} should match and so, due to their simplicity and amenability to conditional expectations, we will only investigate the case of $\pm 1$ variables in \cref{q:other}. 

\subsection{Setup} If \cref{q:other} has a positive answer, then a version of \cref{conj:other} (for $\pm 1$ variables) with conditional expectations should presumably hold. We formulate this stronger conjecture with some additional notation.

\begin{conjecture} \label{conj:conditional}
Let $(X(k))_{k \geq 1}$ be a sequence of independent random variables uniform on $\{\pm 1\}$ and define $(A(N))_{N \geq 0}$ by \eqref{power_series}.  For any finite subset $\K \subseteq \mathbb{N}$ of positive integers and any function $\epsilon: \K\to \{\pm 1\}$, there exists an absolute positive constant $C(\epsilon)$ such that
\[
\mathbb{E}\left[ \abs{A(N)} :  X({k}) = \epsilon(k) \text{ for all } k\in \K \right] \sim \frac{C(\epsilon)}{(\log{N})^{1/4}} \qquad  \text{as } N \to \infty. 
\]
\end{conjecture}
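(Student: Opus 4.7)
The plan is to reduce the conditional asymptotic to an unconditional one via a factorization of the generating series, then identify the constant $C(\epsilon)$ as an explicit functional of the deterministic conditioning data. Write
\[
P_{\epsilon}(z) = \exp\Bigl(\sum_{k \in \K} \frac{\epsilon(k)}{\sqrt{k}} z^k\Bigr) = \sum_{j \geq 0} p_j(\epsilon) z^j, \qquad \tilde F(z) = \exp\Bigl(\sum_{k \notin \K} \frac{X(k)}{\sqrt{k}} z^k\Bigr) = \sum_{m \geq 0} \tilde A(m) z^m.
\]
On the conditioning event $\{X(k) = \epsilon(k) : k \in \K\}$, the identity \eqref{power_series} factors as $\sum_n A(n) z^n = P_{\epsilon}(z) \tilde F(z)$, so
\[
\E\bigl[|A(N)| : X(k) = \epsilon(k) \text{ for } k \in \K\bigr] = \E\Bigl[\,\Bigl| \sum_{j=0}^{N} p_j(\epsilon)\, \tilde A(N-j) \Bigr|\,\Bigr].
\]
The $p_j(\epsilon)$ are explicit deterministic coefficients that grow at most polynomially in $j$ (since $P_\epsilon$ is the exponential of a fixed polynomial), and $\tilde A(m)$ is distributed like $A(m)$ with finitely many summands removed from the exponent.

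The main steps I would carry out are as follows. First, establish the unconditional asymptotic $\E[|\tilde A(N)|] \sim C/(\log N)^{1/4}$ for the truncated sequence $\tilde A$. Since $\K$ is finite, the covariance structure of the underlying Gaussian-like log-correlated field driving the $(\log N)^{-1/4}$ decay in \eqref{eqn:L1} is unaffected in the large-$N$ limit, so the Soundararajan--Zaman argument should go through with the same exponent (and, in fact, with the same constant as in \cref{conj:other}). Second, pass to the Fourier/contour representation
\[
\sum_{j=0}^{N} p_j(\epsilon)\, \tilde A(N-j) = \frac{1}{2\pi} \int_{0}^{2\pi} P_{\epsilon}(e^{i\theta}) \tilde F(e^{i\theta}) e^{-iN\theta}\, d\theta,
\]
so that $|P_{\epsilon}(e^{i\theta})|$ appears as a bounded, smooth, strictly positive deterministic weight on the random measure induced by $|\tilde F(e^{i\theta})|$ (which, after suitable normalization, is the critical holomorphic multiplicative chaos). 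Third, adapt the $L^1$-moment analysis from \cite{SoundararajanZaman-2021}, tracking how the deterministic weight $|P_\epsilon|$ interacts with the near-critical freezing contributions from dyadic frequency scales $e^n < k \leq e^{n+1}$; the weight acts uniformly on scales well below $N$ and should modify the limiting constant but not the $(\log N)^{-1/4}$ exponent.

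Identifying $C(\epsilon)$ precisely would then amount to showing that the $(\log N)^{-1/4}$ asymptotic commutes with multiplication by the bounded deterministic factor $|P_\epsilon(e^{i\theta})|$, so that $C(\epsilon)$ can be expressed as a specific integral functional of $P_\epsilon$ against the putative limiting total-mass distribution of critical multiplicative chaos. As a sanity check, one would verify the tower-of-expectations identity $C = 2^{-|\K|} \sum_{\epsilon_0 : \K \to \{\pm 1\}} C(\epsilon_0)$, which is forced by conditioning on $\{X(k)\}_{k \in \K}$ and must hold if both \cref{conj:other} and \cref{conj:conditional} are true. The principal obstacle is that the whole strategy presupposes a quantitative form of \cref{conj:other}, which is itself open; beyond that, the hardest technical step is controlling the interaction between the deterministic weight $|P_\epsilon(e^{i\theta})|$ and the critical chaos at the scale where cancellations driving the $(\log N)^{-1/4}$ saving occur, since it is precisely in this regime that uniform estimates tend to break down and where most of the technical work in Harper's and Soundararajan--Zaman's proofs is concentrated.
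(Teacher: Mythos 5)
The statement you are addressing is \cref{conj:conditional}, which the paper does not prove: it is an open conjecture, supported there only by Monte Carlo estimates (Table \ref{table:conditioned_C}) together with the symmetry result \cref{lemma:odd_symmetry}, which is the only rigorous statement in that section. Your proposal likewise is not a proof, and you essentially concede as much. The factorization $\sum_n A(n)z^n = P_\epsilon(z)\tilde F(z)$ on the conditioning event is correct and is a reasonable structural reduction, and the tower identity $C = 2^{-|\K|}\sum_{\epsilon_0} C(\epsilon_0)$ is a genuine consistency constraint (one the paper's data respects). But every step that would actually produce the asymptotic is asserted rather than argued. Your first step already requires $\E[|\tilde A(N)|]\sim C'/(\log N)^{1/4}$ for the truncated sequence, which is at least as hard as \cref{conj:other}: the only rigorous input available, \eqref{eqn:L1}, gives matching upper and lower bounds of order $(\log N)^{-1/4}$ but no asymptotic, and upgrading order-of-magnitude bounds to an asymptotic for critical (holomorphic) multiplicative chaos is precisely the open problem. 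Saying that ``the Soundararajan--Zaman argument should go through'' and that the deterministic weight ``should modify the limiting constant but not the exponent'' restates the conjecture in the conditional setting; it does not advance toward proving it, since the entire difficulty in Harper's and Soundararajan--Zaman's work lives exactly in the critical regime where you acknowledge you cannot control the interaction.

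Two further concrete problems. First, your parenthetical claim that $\tilde A$ would satisfy the asymptotic ``with the same constant as in \cref{conj:other}'' is unsupported and probably false: Table \ref{table:conditioned_C} shows $C(\epsilon)$ ranging from roughly $0.67$ to $1.53$ as $\epsilon$ varies over $\{\pm1\}^{\{2,4,6,8\}}$, so finitely many low frequencies demonstrably affect the constant, and deleting those frequencies (which is what $\tilde A$ does) is yet another perturbation with no reason to preserve $C$. Second, expressing $C(\epsilon)$ as ``an integral functional of $P_\epsilon$ against the putative limiting total-mass distribution'' presupposes the existence of that limiting distribution, which the paper explicitly states is unknown even in the unconditional case. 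In short: the reduction you describe is a plausible heuristic roadmap consistent with the paper's conjectural picture, but it contains no proof of any of the analytic inputs it needs, each of which is an open problem; the paper itself offers only computational evidence for this statement, and nothing in your proposal goes beyond that.
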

Note the unconditioned constant $C$ in \cref{conj:other} corresponds to the situation where $\K$ is the empty set. Now, by an analysis similar to the previous sections, computational evidence should support \cref{conj:conditional}, but the precise dependence of the constants $C(\epsilon)$ on the functions $\epsilon : \K \to \{\pm 1\}$ is not clear.  \cref{q:other} extends to these constants in a more precise manner.

\begin{question}\label{q:conditional}
Assume that \cref{conj:conditional}  is true. Do there exist sequences of real numbers $(\beta_k)_{k\geq 1}$,  $(\beta_k^+)_{k\geq 1}$ and  $(\beta_k^-)_{k\geq 1}$ such that for any finite subset $\K \subseteq \mathbb{N}$ and any $\epsilon: \K\to \{\pm 1\}$, 
\begin{equation}
    \label{eqn:conditional}
C(\epsilon) = \prod_{k \in \K} \beta_k^{\epsilon(k)} \cdot \prod_{k \not\in \K} \beta_k \, ?
\end{equation}
\end{question}
Again, notice that \eqref{eqn:product} corresponds to the situation where $\K$ is the empty set.  It seems plausible that the answers to Questions \ref{q:complex}, \ref{q:other}, and \ref{q:conditional} should all match (provided the corresponding conjectures are also all true). We shall therefore investigate the full strength of \cref{q:conditional} but, as we shall see, our computational evidence does not exhibit the multiplicative properties required by \eqref{eqn:conditional}. Thus, we hypothesize that the answers to Questions \ref{q:complex}, \ref{q:other}, and \ref{q:conditional} are all negative. 

Before we investigate \cref{q:conditional}, we record a lemma that illustrates  why conditioning on a single $X(k)$ for an  \textit{odd} integer $k$ does not yield new information. 

\begin{lemma}\label{lemma:odd_symmetry}
Let $k$ and $N$ be positive integers with $k$ odd and $\epsilon_k\in \{\pm 1\}$. Then 
\[
\mathbb{E}\big[ \abs{A(N)}: X(k) = \epsilon_k \big] = \mathbb{E}\big[ \abs{A(N)} \big]. 
\]
\end{lemma}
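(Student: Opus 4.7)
The plan is to exhibit a measure-preserving involution $T$ on the sample space of $(X(j))_{j \geq 1}$ that (a) sends $X(k) \mapsto -X(k)$ for our fixed odd $k$, and (b) leaves $|A(N)|$ unchanged. Once this is in hand, the distributional identity $(X(k), |A(N)|) \stackrel{d}{=} (-X(k), |A(N)|)$ follows from the fact that $T$ preserves the underlying measure. Consequently, the conditional distribution of $|A(N)|$ given $X(k) = 1$ agrees with the conditional distribution given $X(k) = -1$, and the claim is immediate from the law of total expectation.

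The natural candidate for $T$ is the ``odd-flip'' map: $X(j) \mapsto -X(j)$ for every odd $j$ (including $j = k$) and $X(j) \mapsto X(j)$ for every even $j$. Because the $X(j)$ are independent and each is symmetric under sign change, $T$ preserves the joint distribution. To verify that $|A(N)|$ is invariant under $T$, I would substitute $z \mapsto -z$ in the defining identity \eqref{power_series}:
\[
\exp\Bigl(\sum_{j \geq 1} \frac{X(j)}{\sqrt{j}}(-z)^j\Bigr) = \exp\Bigl(\sum_{j \geq 1} \frac{(-1)^j X(j)}{\sqrt{j}}z^j\Bigr) = \sum_{n \geq 0} (-1)^n A(n) z^n.
\]
The middle expression is precisely the generating function obtained by feeding $T(X)$ into \eqref{power_series}, so comparing coefficients of $z^N$ yields $A(N) \circ T = (-1)^N A(N)$, and hence $|A(N)| \circ T = |A(N)|$.

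There is no substantive obstacle to this approach; the only step warranting care is the sign calculation, which can be verified either by the coefficient-extraction above or combinatorially from \eqref{ceq:3}, since for any partition $\lambda$ of $N$ one has
\[
a(\lambda) \circ T = (-1)^{\sum_{j \text{ odd}} m_j(\lambda)} a(\lambda) = (-1)^N a(\lambda),
\]
where the last equality uses the parity identity $N = \sum_j j\, m_j(\lambda) \equiv \sum_{j \text{ odd}} m_j(\lambda) \pmod 2$. Summing over $\lambda$ and invoking \eqref{eqn:partition_identity} recovers $A(N) \circ T = (-1)^N A(N)$. It is worth noting that the oddness of $k$ is used essentially: for even $k$, the odd-flip map leaves $X(k)$ untouched, so this symmetry provides no information about the conditional distribution.
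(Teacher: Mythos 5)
Your proof is correct and follows essentially the same route as the paper: the authors use exactly the same sign-flip map $\phi(X)=((-1)^jX(j))_j$, derive $A(N;\phi(X))=(-1)^NA(N;X)$ from the same parity identity $\sum_j j\,m_j(\lambda)=N$ applied to \eqref{ceq:3}, and conclude by noting that $\phi$ is a measure-preserving involution exchanging the events $\{X(k)=+1\}$ and $\{X(k)=-1\}$ for odd $k$. The only cosmetic difference is that you package the last step in the language of a general measure-preserving involution (which the paper relegates to a remark about symmetric distributions), whereas the paper spells out the explicit bijection between the two halves of $\{\pm1\}^N$.
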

\begin{proof} Consider the idempotent map $\phi: \{ \pm 1\}^N \to \{\pm 1\}^N$ given by $\left( x_j \right)_{1\leq j \leq N} \mapsto \left((-1)^{j}\cdot x_j \right)_{1 \leq j \leq N}$. 
By \eqref{ceq:3}, it follows that for any partition $\lambda$ of $N$,
\[
a(\lambda; \phi(X)) = \prod_j (-1)^{j m_j(\lambda)} a(\lambda; X) =  (-1)^{N} a(\lambda; X),
\]
as $\sum_j jm_j(\lambda) = |\lambda| = N$. This implies by \eqref{eqn:partition_identity} that
\[
A(N;\phi(X)) = (-1)^N A(N; X). 
\]
Let $\Omega_+ \subseteq \{\pm 1\}^N$ (resp. $\Omega_-$) denote the set of tuples $(x_1,\dots,x_N) \in \{\pm 1\}^N$ such that $x_k = +1$ (resp. $x_k = -1$).  Since $k$ is odd, the function  $\phi$ maps $\Omega_+$ to a subset of $\Omega_-$. As $\phi$ is idempotent and $\Omega_{\pm}$ are disjoint sets whose union is $\{ \pm 1\}^N$, it follows that $\phi$ bijectively maps $\Omega_+$ to  $\Omega_-$ and vice versa.  Combining all of our observations, we have that
\[
\mathbb{E}\big[ \abs{A(N; X)}: X(k) = 1 \big] = \mathbb{E}\big[ \abs{A(N; \phi(X))}: X(k) = 1 \big]  =   \mathbb{E}\big[ \abs{A(N; X)}: X(k) = -1 \big].  
\]
This implies the desired result. 
\end{proof}

\begin{remark}
The argument extends to a similar result for any sequence $(X(k))_{k \geq 1}$ of independent random variables where $X$ and $-X$ are identically distributed.  
\end{remark}

\subsection{Computations for \cref{q:conditional}} In view of \cref{lemma:odd_symmetry}, we only investigate  \cref{q:conditional} for a subset $\K$ of even integers, namely $\K = \{2,4,6,8\}$. First, we approximate $C(\epsilon)$ for all functions $\epsilon: \K \to \{\pm 1\}$ by approximating $\mathbb{E}[\abs{A(N)}]$ at $N = 5000$ using $5 \times 10^6$ samples each\footnote{We used  $N = 5000$ instead of  $N = 20,000$ as in previous sections due to the limitations of our computational resources but we suspect that this is not a serious concern for the evaluation of \cref{q:conditional}.}. Heuristically, if the answer to \cref{q:conditional} were positive, the values $\beta_k$ for smaller $k$ would have greater contribution to the constant $C$ because the proportion of partitions with large parts is smaller than those with smaller parts. Table \ref{table:conditioned_C} presents the results of our estimation of $C(\epsilon)$ for $\K = \{2,4,6,8\}$, which confirms this heuristically.

\begin{table}
\centering
\begin{tabular}{|l|l||l|l|} 
\hline
$(\epsilon_2,\epsilon_4,\epsilon_6,\epsilon_8)$ &     $C(\epsilon)$ & $(\epsilon_2,\epsilon_4,\epsilon_6,\epsilon_8)$ &     $C(\epsilon)$   \\ 
\hline\hline
$(+1, +1, +1, +1)$        & 1.5340 & $(-1,+1,+1, +1)$       & 0.7578  \\ 
\hline
$(+1, +1, +1, -1)$       & 1.1399 & $(-1,+1,+1, -1)$      & 0.7707   \\ 
\hline
$(+1, +1, -1, +1)$       & 1.0018 & $(-1,+1, -1, +1)$      & 0.8066   \\ 
\hline
$(+1,+1, -1, -1)$      & 0.9819 & $(-1,+1, -1, -1)$     & 0.7274    \\ 
\hline
$(+1, -1,+1, +1)$       & 0.9554 & $(-1, -1,+1, +1)$      & 0.7819  \\ 
\hline
$(+1, -1,+1, -1)$      & 0.8184  & $(-1, -1,+1, -1)$     & 0.8003  \\ 
\hline
$(+1, -1, -1, +1)$      & 0.9008 & $(-1, -1, -1, +1)$     & 0.7145   \\ 
\hline
$(+1, -1, -1, -1)$     & 0.9404 & $(-1, -1, -1, -1)$    & 0.6738  \\ 
\hline
\end{tabular}
~\\[5pt]
\caption{Estimation of $C(\epsilon)$ with $\epsilon(k) = \epsilon_k$ for $k \in \K = \{2,4,6,8\}$} 
\label{table:conditioned_C}
\end{table}

Next, we compare various constants of conditional expectations.  
For any integer $k \in \mathbb{N}$, define $C_k^+$ (resp. $C_k^-$) to be the constant in \cref{conj:conditional} corresponding to the set $\K = \{k\}$ and the choice of function $\epsilon(k) = +1$ (resp. $\epsilon(k) = -1$). If the answer to \cref{q:conditional} is positive, then  
\[
C_k^+ = \beta_k^+ \prod_{j \neq k} \beta_j \quad \text{and} \quad C_k^- = \beta_k^- \prod_{j \neq k} \beta_j.
\]
For any finite subset $\K \subseteq \mathbb{N}$ and any function $\epsilon: \K\to \{\pm 1\}$, define the ratio 
\[
\rho(\epsilon)   \coloneqq \frac{C(\epsilon) \cdot C^{|\K|-1}}{\prod_{k\in \K}C_k^{\epsilon(k)}},\]
where $C$ is, as before, the conjectured unconditioned asymptotic constant. 

 \begin{table}[h]
	\begin{center}
		\begin{tabular}{|c||c|c|c|c|c|c|}
		\hline
			\diagbox{$(\epsilon_1,\epsilon_2)$}{$\K$} & $\{2,4\}$ & $\{2,6\}$ & $\{2,8\}$ & $\{4,6\}$ & $\{4,8\}$ & $\{6,8\}$ \\
			\hline \hline
			$(+1, +1)$ & 1.0433 & 1.0176 & 1.0191 & 1.0303 & 1.0195 & 1.0232 \\
			\hline
			$(+1, -1)$ & 0.9492 & 0.9803 & 0.9793 & 0.9661 & 0.9788 & 0.9748 \\\hline
			$(-1, +1)$ & 0.9406 & 0.9759 & 0.9738 & 0.9645 & 0.9772 & 0.9740 \\\hline
			$(-1, -1)$ & 1.0696 & 1.0270 & 1.0284 & 1.0397 & 1.0248 & 1.0283 \\\hline
		\end{tabular}
	\end{center}
	\caption{$\rho(\epsilon)$ for $\K = \{k_1,k_2\}$ with even $2\leq k_1 < k_2 \leq 8$ and $\epsilon(k) = \epsilon_{k}$.} 
	\label{table:conditioned_pair_ratios}
\end{table}

\begin{table}[h]
	\begin{center}
		\begin{tabular}{|c||c|c|c|c|}
			\hline
			\diagbox{$(\epsilon_1,\epsilon_2,\epsilon_3)$}{$\K$}& $\{2,4,6\}$ & $\{2,4,8\}$ & $\{2,6,8\}$ & $\{4,6,8\}$ \\
			\hline\hline
			$(+1, +1, +1)$ & 1.1336 & 1.0903 & 1.0932 & 1.0785 \\
			\hline
			$(+1, +1, -1)$ & 0.9421 & 0.9922 & 0.9353 & 0.9778 \\
			\hline
			$(+1, -1, +1)$ & 0.8816 & 0.9356 & 0.9360 & 0.9533 \\
			\hline
			$(+1, -1, -1)$ & 1.0251 & 0.9641 & 1.0285 & 0.9800 \\
			\hline
			$(-1, +1, +1)$ & 0.8885 & 0.9223 & 0.9272 & 0.9584 \\
			\hline
			$(-1, +1, -1)$ & 0.9990 & 0.9605 & 1.0288 & 0.9712 \\
			\hline
			$(-1, -1, +1)$ & 1.0783 & 1.0342 & 1.0261 & 0.9982 \\
			\hline
			$(-1, -1, -1)$ & 1.0599 & 1.1080 & 1.0280 & 1.0849 \\
			\hline
		\end{tabular}
	\end{center}
	\caption{$\rho(\epsilon)$ for $\K = \{k_1,k_2,k_3\}$ with even $2\leq k_1 < k_2 < k_3 \leq 8$ and $\epsilon(k) = \epsilon_{k}$.}
	\label{table:conditioned_trip_ratios}
\end{table}

If the answer to \cref{q:conditional} were positive then  \eqref{eqn:conditional} would imply that
 \begin{equation}
 \label{eq:euler_test}
     \rho(\epsilon) = 1
 \end{equation}
 for any function $\epsilon : \K \to \{\pm 1\}$. We examine the truth of \eqref{eq:euler_test} with $2$-subsets $\K \subseteq \{2,4,6,8\}$ in Table \ref{table:conditioned_pair_ratios} and with $3$-subsets in Table \ref{table:conditioned_trip_ratios}.
 They both   suggest that \eqref{eq:euler_test} does not hold, which indicates that the answer to Question \ref{q:conditional} (and hence Questions \ref{q:complex} and \ref{q:other}) should be negative.

\bibliographystyle{acm}
\bibliography{references.bib}
\parindent0pt
\end{document}